\newtheorem*{theoA}{Theorem A}
\newtheorem*{theoB}{Theorem B}
\newtheorem*{theoC}{Theorem C}
\newtheorem*{theoD}{Theorem D}
\newtheorem*{theoE}{Theorem E}
\newtheorem*{theoF}{Theorem F}
\newtheorem*{cor A}{Corollary A}
\newtheorem*{cor B}{Corollary B}
\newtheorem{theo}{Theorem}[section]
\newtheorem{lem}{Lemma}[section]
\newtheorem{prob}{Problem}[section]
\newcommand{\ol}{\overline}
\newcommand{\be}{\begin{equation}}
\newcommand{\ee}{\end{equation}}
\newcommand{\beas}{\begin{eqnarray*}}
\newcommand{\eeas}{\end{eqnarray*}}
\newcommand{\bea}{\begin{eqnarray}}
\newcommand{\eea}{\end{eqnarray}}
\numberwithin{equation}{section}
\begin{document}
\title[M\MakeLowercase{ultidimensional analogues of the Refined} B\MakeLowercase{ohr} \MakeLowercase{type inequalities}]{\LARGE M\MakeLowercase{ultidimensional analogues of the Refined} B\MakeLowercase{ohr} \MakeLowercase{type inequalities}}
\date{}
\author[M. B. A\MakeLowercase{hamed}, S. M\MakeLowercase{ajumder} \MakeLowercase{and} N. S\MakeLowercase{arkar}]{M\MakeLowercase{olla} B\MakeLowercase{asir} A\MakeLowercase{hamed}$^*$, S\MakeLowercase{ujoy} M\MakeLowercase{ajumder} \MakeLowercase{and} N\MakeLowercase{abadwip} S\MakeLowercase{arkar}}

\address{Molla Basir Ahamed,
	Department of Mathematics,
	Jadavpur University,
	Kolkata-700032, West Bengal, India.}
\email{mbahamed.math@jadavpuruniversity.in}

\address{Sujoy Majumder, Department of Mathematics, Raiganj University, Raiganj, West Bengal-733134, India.}
\email{sm05math@gmail.com}

\address{Nabadwip Sarkar, Department of Mathematics, Raiganj University, Raiganj, West Bengal-733134, India.}
\email{naba.iitbmath@gmail.com}

\renewcommand{\thefootnote}{}
\footnote{2010 \emph{Mathematics Subject Classification}: 30A10, 30C45, 30C62, 30C75.}
\footnote{\emph{Key words and phrases}: Bounded holomorphic functions, Multidimensional Bohr.}
\footnote{*\emph{Corresponding Author}: Molla Basir Ahamed.}

\renewcommand{\thefootnote}{\arabic{footnote}}
\setcounter{footnote}{0}

\begin{abstract}
The main aim of this article is to establish a sharp improvement of the classical Bohr inequality for bounded holomorphic mappings in the polydisk $\mathbb{D}^n$.We also prove two other sharp versions of the Bohr inequality in the setting of several complex variables: one by replacing the constant term with the absolute value of the function, and another by replacing it with the square of the absolute value of the function.Furthermore, we establish multidimensional analogues of known results concerning the modulus of the derivative of analytic functions in the unit disk $\mathbb{D}$, replacing the derivative with the radial derivative of holomorphic functions in $\mathbb{D}^n$.All of the established results are shown to be sharp.
\end{abstract}
\thanks{Typeset by \AmS -\LaTeX}
\maketitle

\tableofcontents
\section{\bf Introduction and Preliminaries}
The classical theorem of Harald Bohr \cite{Bohr-PLMS-1914}, originally examined a century ago, continues to generate intensive research on what is now known as Bohr's phenomenon. Renewed interest surged in the 1990s due to successful extensions to holomorphic functions of several complex variables and to more abstract functional analytic settings. For instance, in 1997, Boas and Khavinson \cite{Boas-Khavinson-PAMS-1997} introduced and determined the $n$-dimensional Bohr radius for the family of holomorphic functions bounded by unity on the polydisk (see Section \ref{Sub-Sec-1.3} for a detailed discussion). This seminal work stimulated significant research interest in Bohr-type questions across diverse mathematical domains.Subsequent investigations have yielded further results on Bohr's phenomenon for multidimensional power series. Notable contributions in this area include those by Aizenberg \cite{Aizen-PAMS-2000,Aizenberg,Aizenberg-SM-2007}, Aizenberg \textit{et al.} \cite{Aizenberg-PAMC-1999,Aizenberg-SM-2005,Aizenberg-Aytuna-Djakov-JMAA-2001}, Defant and Frerick \cite{Defant-Frerick-IJM-2006}, and Djakov and Ramanujan \cite{Djakov-Ramanujan-JA-2000}. A comprehensive overview of various aspects and generalizations of Bohr's inequality can be found in \cite{Lata-Singh-PAMS-2022,Liu-Pon-PAMS-2021,Ali-Abu-Muhanna-Ponnusamy,Alkhaleefah-Kayumov-Ponnusamy-PAMS-2019,Defant-Frerick-AM-2011,Hamada-IJM-2012,Paulsen-Popescu-Singh-PLMS-2002,Paulsen-Singh-PAMS-2004,Paulsen-Singh-BLMS-2006}, the monographs by Kresin and Maz'ya \cite{Kresin-1903}, and the references cited therein. In particular, \cite[Section 6.4]{Kresin-1903} on Bohr-type theorems highlights rich opportunities to extend several existing inequalities to holomorphic functions of several complex variables and, significantly, to solutions of PDEs.
\subsection{\bf Classical Bohr inequality and its recent implications}
Let $ \mathcal{B} $ denote the class of analytic functions in the unit disk $\mathbb{D}:=\{{\zeta}\in\mathbb{C} : |{ \zeta}|<1\} $ of the form $ f({ \zeta})=\sum_{k=0}^{\infty}a_k {\zeta}^k $ such that $ |f({ \zeta})|<1 $ in $ \mathbb{D} $.  In the study of Dirichlet series, in $ 1914$, Harald Bohr \cite{Bohr-PLMS-1914} discovered the following interesting phenomenon 

\begin{theoA} If $ f\in\mathcal{B}$, then the following sharp inequality holds:
\begin{align}
\label{b1}	M_f(r):=\sum_{k=0}^{\infty}|a_k|r^k\leq 1\;\;\mbox{for}\;\; |{ \zeta}|=r\leq \frac{1}{3}.
\end{align}
\end{theoA}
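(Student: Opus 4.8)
The plan is to bound $M_f(r)$ termwise by reducing everything to a single sharp coefficient estimate and then to an elementary one-variable inequality. First I would record the coefficient bound for the class $\mathcal{B}$: if $f(\zeta)=\sum_{k=0}^{\infty}a_k\zeta^k\in\mathcal{B}$, then $|a_k|\le 1-|a_0|^2$ for every $k\ge 1$. To prove this I would fix $k\ge 1$ and form the Wiener average $g(\zeta)=\frac{1}{k}\sum_{j=0}^{k-1}f\big(e^{2\pi i j/k}\zeta\big)$, which again maps $\mathbb{D}$ into $\overline{\mathbb{D}}$ and retains only the powers $a_0+a_k\zeta^k+a_{2k}\zeta^{2k}+\cdots$. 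Writing $g(\zeta)=G(\zeta^k)$ produces a function $G\in\mathcal{B}$ with $G(0)=a_0$ and $G'(0)=a_k$, so the Schwarz--Pick inequality at the origin, $|G'(0)|\le 1-|G(0)|^2$, immediately yields $|a_k|\le 1-|a_0|^2$.

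With this in hand, I would set $a:=|a_0|\in[0,1]$ and estimate
\[
M_f(r)=|a_0|+\sum_{k=1}^{\infty}|a_k|r^k\le a+(1-a^2)\sum_{k=1}^{\infty}r^k=a+(1-a^2)\frac{r}{1-r}.
\]
The next step is to show the right-hand side is at most $1$ whenever $r\le 1/3$. Factoring, I would write
\[
a+(1-a^2)\frac{r}{1-r}-1=(1-a)\Big[(1+a)\frac{r}{1-r}-1\Big],
\]
so that, since $1-a\ge 0$, the desired bound is equivalent to $(1+a)r\le 1-r$, that is, $r\le 1/(2+a)$. Because $a\le 1$ forces $1/(2+a)\ge 1/3$, the inequality $M_f(r)\le 1$ holds for every $r\le 1/3$, uniformly over $f\in\mathcal{B}$.

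Finally, for sharpness I would test the M\"obius family $\varphi_a(\zeta)=(a-\zeta)/(1-a\zeta)$ with $a\in[0,1)$, whose Taylor coefficients are $a_0=a$ and $a_k=-a^{k-1}(1-a^2)$ for $k\ge 1$; summing the resulting geometric series gives $M_{\varphi_a}(r)=a+(1-a^2)r/(1-ar)$, which is strictly increasing in $r$ and equals $1$ exactly at $r=1/(1+2a)$. Since $1/(1+2a)\downarrow 1/3$ as $a\to 1^-$, for any $r>1/3$ one can choose $a$ close enough to $1$ that $M_{\varphi_a}(r)>1$, so the constant $1/3$ cannot be enlarged. The only genuinely nontrivial ingredient here is the sharp coefficient estimate $|a_k|\le 1-|a_0|^2$; once it is available the remaining analysis is elementary, and I expect the Wiener-averaging together with the Schwarz--Pick step to be the main point requiring care.
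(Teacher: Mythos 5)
Your proposal is correct and complete: the Wiener averaging argument gives the key estimate $|a_k|\le 1-|a_0|^2$ for $k\ge 1$, the factorization $a+(1-a^2)\frac{r}{1-r}-1=(1-a)\bigl[(1+a)\frac{r}{1-r}-1\bigr]$ reduces the bound to $r\le 1/(2+a)\ge 1/3$, and the M\"obius family $\varphi_a$ with threshold $1/(1+2a)\downarrow 1/3$ settles sharpness. Note that the paper does not actually prove Theorem A; it is recalled as classical background (attributed to Bohr for $r\le 1/6$ and to Wiener for the sharp constant), so there is no in-paper argument to compare against --- but your route is exactly the standard one the paper alludes to when it cites the coefficient inequality $|a_n|\le 1-|a_0|^2$ immediately after the statement, and the extremal functions you use are the same M\"obius-type maps the paper employs (in their several-variable form) for the sharpness parts of Theorems 2.1--2.4.
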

We remark that if $|f({ \zeta})|\leq 1$ in $\mathbb{D}$ and $|f({ \zeta}_0)|=1$ for some point ${ \zeta}_0$ in $\mathbb{D}$, then $f$ must be a unimodular constant; therefore, we restrict our attention to non-constant functions $f\in \mathcal{B}$. The inequality (\ref{b1}), together with the sharp constant $1/3$, is known as the \textit{classical Bohr inequality}, and $1/3$ is called the \textit{Bohr radius} for the family $\mathcal{B}$. Bohr \cite{Bohr-PLMS-1914} originally established the inequality only for $r\le 1/6$, and Wiener later proved that the constant $1/3$ is sharp. Several alternative proofs have since appeared (see \cite{Paulsen-Popescu-Singh-PLMS-2002}-\cite{Paulsen-Singh-BLMS-2006}, \cite{Sidon-MZ-1927}, \cite{Tomic-MS-1962}, and the survey chapters \cite{Ali-Abu-Muhanna-Ponnusamy} and \cite[Chapter 8]{Garcia-Mashreghi-Ross}). Several of these works employ methods from complex analysis, functional analysis, number theory, and probability, and they further develop new theory and applications of Bohr's ideas on Dirichlet series. For example, various multidimensional generalizations of this result have been obtained in \cite{Aizenberg-PAMC-1999,Aizenberg-Aytuna-Djakov-JMAA-2001,Aizenberg,Boas-Khavinson-PAMS-1997,Djakov-Ramanujan-JA-2000,Jia-Liu-Ponnusamy-AMP-2025}.
\vspace{1.2mm}

It is worth noting that if $|a_0|$ in Bohr's inequality is replaced by $|a_0|^2$, then the Bohr radius improves from 
$1/3$ to $1/2$. Moreover, if $a_0=0$ in Theorem A, the sharp Bohr radius can be further improved to $1/\sqrt{2}$ (see, for example, \cite{Kayumov-Ponnusamy-CMFT-2017}, \cite[Corollary 2.9]{Paulsen-Popescu-Singh-PLMS-2002}, and the recent work \cite{Ponnusamy-Wirths-CMFT-2020} for a general result). These improvements rely on sharp coefficient estimates such as
\begin{align*}
|a_n|\leq 1-|a_0|^2,\quad n\geq 1,\;f\in\mathcal{B}.
\end{align*}

Using these inequalities, in \cite{Kayumov-Ponnusamy-CMFT-2017} it is observed that the sharp form of Theorem A cannot be obtained in the extremal case $|a_0|<1$. Nevertheless, a sharp version of Theorem A has been achieved for each individual function in $\mathcal{B}$ (see \cite{Alkhaleefah-Kayumov-Ponnusamy-PAMS-2019}), and for several subclasses of univalent functions (see \cite{Aizenberg, Muhanna-CVEE-2010}).\vspace{1.2mm}

{ Let $\mathbb{N}$ denote the set of positive integers. There is also notion of Rogosinski radius \cite{Rogosinski-1923} which is as follows: If $|f({ \zeta})|<1$ on $\mathbb{D}$, then we have $|S_N(\zeta)|\leq 1$ for $|{ \zeta}|\leq 1/2$ for every $N\in\mathbb{N}$, where $S_N(\zeta)=\sum_{n=1}^{N-1}a_n{ \zeta}^n$. The constant $1/2$ is sharp and called the \emph{Rogosinski radius}.}\vspace{1.2mm}

 In \cite{LLP}, Liu \textit{et al.} established a Refined Bohr-Rogosinski inequalities by applying a strengthened system of coefficient inequalities (see \cite[Lemma 4]{LLP}).

\begin{theoB}\cite[Theorem 1]{LLP}
Suppose that $f\in \mathcal{B}$ and $f({{ \zeta}})=\sum_{n=0}^{\infty} a_n { \zeta}^n.$ For $n\ge N$, let $t=\left[\frac{N-1}{2}\right]$. Then
\begin{align*}
|f({ \zeta})|+\sum_{n=N}^{\infty} |a_n| r^n+\operatorname{sgn}(t)\sum_{n=1}^{t} |a_n|^{2}\frac{r^{N}}{1-r}+\left( \frac{1}{1+|a_{0}|}+\frac{r}{1-r}\right)\sum_{n=t+1}^{\infty} |a_n|^{2} r^{2n}\le 1
\end{align*}
for $|{ \zeta}|=r\le R_{N}$, where $R_{N}$ is  the positive root of the equation $2(1+r)r^N-(1-r)^2=0$.
The radius $R_{N}$ is best possible.
Moreover,
\begin{align*}
|f({ \zeta})|^{2}+\sum_{n=N}^{\infty} |a_n| r^n+\operatorname{sgn}(t)\sum_{n=1}^{t} |a_n|^{2}\frac{r^{N}}{1-r}+\left( \frac{1}{1+|a_{0}|}+\frac{r}{1-r}\right)\sum_{n=t+1}^{\infty} |a_n|^{2} r^{2n}\le 1
\end{align*}
for $|{ \zeta}|=r\le R_{N}'$, where $R_{N}'$ is is  the positive root of the equation $2(1+r)r^N-(1-r)^2=0$.  
The radius $R_{N}'$ is best possible.
\end{theoB}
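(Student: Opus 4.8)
The plan is to fix $a:=|a_0|\in[0,1)$ and reduce the asserted inequality to a one--variable inequality in $r$, after which $R_N'$ emerges as the threshold dictated by the worst case in $a$. Two ingredients drive the argument. First, a sharp pointwise bound for the modulus: by the Schwarz--Pick lemma, $|f(\zeta)|\le\frac{a+r}{1+ar}$, hence
\[
|f(\zeta)|^2\le\left(\frac{a+r}{1+ar}\right)^2 .
\]
Second, the strengthened coefficient system of \cite[Lemma 4]{LLP}, which packages the tail together with the two square--sums into a single estimate,
\[
\sum_{n=N}^{\infty}|a_n|r^n+\operatorname{sgn}(t)\frac{r^N}{1-r}\sum_{n=1}^{t}|a_n|^2+\left(\frac{1}{1+a}+\frac{r}{1-r}\right)\sum_{n=t+1}^{\infty}|a_n|^2 r^{2n}\le(1-a^2)\frac{r^N}{1-r},
\]
where $t=\left[\frac{N-1}{2}\right]$. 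The cut--off $t$ is chosen so that $2n<N$ for $n\le t$ and $2n\ge N$ for $n\ge t+1$, so each square term is measured against the sharper of $r^{2n}$ and $r^N$; the factor $\operatorname{sgn}(t)$ merely deletes the first sum in the degenerate cases $N=1,2$, where $t=0$.

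Granting these, the entire left--hand side is at most $\left(\frac{a+r}{1+ar}\right)^2+(1-a^2)\frac{r^N}{1-r}$, so it suffices to prove this is $\le1$. Here I would use the identity $(1+ar)^2-(a+r)^2=(1-a^2)(1-r^2)$, which gives
\[
1-\left(\frac{a+r}{1+ar}\right)^2=\frac{(1-a^2)(1-r^2)}{(1+ar)^2}.
\]
Cancelling the common factor $1-a^2$, the required inequality collapses to $(1+ar)^2 r^N\le(1+r)(1-r)^2$. Since the left side is increasing in $a$, the binding constraint is the limit $a\to1^-$, and the positive root of the resulting equation is exactly $R_N'$. (Since $|f|^2\le|f|$, the inequality is in fact implied by the first part on its radius; the sharper estimate for $|f|^2$ used here is what secures the best--possible $R_N'$.)

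For sharpness I would test the M\"obius function $\phi_a(\zeta)=\frac{a-\zeta}{1-a\zeta}$, which lies in $\mathcal{B}$, has $|a_n|=(1-a^2)a^{n-1}$, and saturates \cite[Lemma 4]{LLP}. Evaluating at $\zeta=-r$, where $|\phi_a(-r)|=\frac{a+r}{1+ar}$ attains the Schwarz--Pick maximum over $|\zeta|=r$, the left--hand side equals precisely $\left(\frac{a+r}{1+ar}\right)^2+(1-a^2)\frac{r^N}{1-r}$, whose excess over $1$ equals
\[
\frac{1-a^2}{(1-r)(1+ar)^2}\big[(1+ar)^2 r^N-(1+r)(1-r)^2\big].
\]
For any $r>R_N'$ one has $(1+r)r^N>(1-r)^2$, so choosing $a<1$ close enough to $1$ makes the bracket positive; hence the radius cannot be enlarged.

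The step I expect to be the main obstacle is the strengthened coefficient inequality \cite[Lemma 4]{LLP} itself: one must show that the two square--sums fill exactly the slack between $\sum_{n\ge N}|a_n|r^n$ and its crude majorant $(1-a^2)\frac{r^N}{1-r}$, which calls for the full sharpened Schur--type coefficient estimates together with a careful treatment of the piecewise split at $t$ (and the degenerate $t=0$ cases). Once that is in hand, the remaining one--variable analysis---verifying monotonicity in $a$ and locating the root---is routine.
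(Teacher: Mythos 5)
Your overall strategy is exactly the one the paper itself uses for the multidimensional analogue (Theorem \ref{Th-2.1}): bound $|f(\zeta)|$ by the Schwarz--Pick quantity $\frac{a+r}{1+ar}$, absorb the tail and the two square-sums into the single estimate $(1-a^2)\frac{r^N}{1-r}$, reduce to a one-variable inequality whose binding case is $a\to 1^-$, and get sharpness from the M\"obius function. Your computation for the $|f|^2$ case is correct, and it in fact shows that $R_N'$ must be the positive root of $(1+r)r^N-(1-r)^2=0$; the statement's assertion that $R_N'$ solves the same equation $2(1+r)r^N-(1-r)^2=0$ as $R_N$ is a misprint (compare the paper's Theorem \ref{Th-2.1}, where $R'_{n,N}$ solves $(1+n\mathbf{r})(n\mathbf{r})^N=(1-n\mathbf{r})^2$).

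Two gaps remain. First, you only carry out the second inequality. The first, with $|f(\zeta)|$ in place of $|f(\zeta)|^2$, needs its own reduction: from $1-\frac{a+r}{1+ar}=\frac{(1-a)(1-r)}{1+ar}$ the requirement becomes $(1+a)(1+ar)r^N\le(1-r)^2$, whose worst case $a\to 1^-$ gives $2(1+r)r^N\le(1-r)^2$ and hence $R_N$; the factor $2$ arises here but not in the squared case, which is precisely why the two radii differ, and your parenthetical remark that the first part implies the second only covers $r\le R_N<R_N'$. Second, the whole weight of the theorem rests on the packaged tail estimate, which you grant as \cite[Lemma 4]{LLP}. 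That is the one genuinely nontrivial ingredient: the paper proves its $n$-dimensional version (Lemma \ref{Lem5}) from Carlson's coefficient inequalities (Lemma \ref{Lem3}), splitting into the cases $N=2m$ and $N=2m+1$ and checking that the square-sums indexed up to $t=[(N-1)/2]$ may be weighted by $r^N/(1-r)$ while those from $t+1$ onward carry the weight $\frac{1}{1+|a_0|}+\frac{r}{1-r}$ against $r^{2n}$. If citing that lemma is permitted, your argument is complete once the first inequality is written out; if not, the lemma is the real work and your proposal does not supply it.
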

For the case $N=1$, it is straightforward to verify that $R_1=\sqrt{5}-2$ and $R_1'=1/3$. However, both constants can be improved when one considers any individual function in $\mathcal{B}$ (in the setting of Theorem B with $N=1$). In this regard, Liu \textit{et al.} \cite{LLP} obtained the following result.

\begin{theoC} \cite[Theorem 2]{LLP}
Suppose that $f\in \mathcal{B}$ and $f({ \zeta})=\sum_{n=0}^{\infty} a_n { \zeta}^n.$ Then
\begin{align*}
A(\zeta, r):=|f({\zeta})|+\sum_{n=1}^{\infty} |a_n| r^n
+\left( \frac{1}{1+|a_{0}|}+\frac{r}{1-r}\right)
\sum_{n=1}^{\infty} |a_n|^{2} r^{2n}
\le 1
\end{align*}
for $|{ \zeta}|=r\le r_{a_{0}}=\dfrac{2}{\,3+|a_{0}|+\sqrt{5}(1+|a_{0}|)}$.
The radius $r_{a_{0}}$ is best possible and satisfies $r_{a_{0}}>\sqrt{5}-2$.
Moreover,
\begin{align*}
B({ \zeta}, r):=|f(\zeta)|^{2}+\sum_{n=1}^{\infty} |a_n| r^n
+\left( \frac{1}{1+|a_{0}|}+\frac{r}{1-r}\right)
\sum_{n=1}^{\infty} |a_n|^{2} r^{2n}
\le 1
\end{align*}
for $|{ \zeta}|=r\le r_{a_{0}}'$, where $r_{a_{0}}'$ is the unique positive root of
\begin{align*}
(1-|a_{0}|^{3}) r^{3} - (1+2|a_{0}|) r^{2} - 2r + 1 = 0.
\end{align*}
The radius $r_{a_{0}}'$ is best possible. Furthermore,
\begin{align*}
\frac{1}{3} < r_{a_{0}}' < \frac{1}{2+|a_{0}|}.
\end{align*}
\end{theoC}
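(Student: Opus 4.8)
The plan is to combine the Schwarz--Pick estimate $|f(\zeta)|\le\frac{|a_0|+r}{1+|a_0|r}$, valid for $f\in\mathcal B$ and $|\zeta|=r$, with the refined coefficient inequalities behind \cite[Lemma 4]{LLP}. Writing $a=|a_0|$, the single sharp estimate I would isolate first is
\[ \sum_{n=1}^{\infty}|a_n|r^n+\Big(\frac{1}{1+a}+\frac{r}{1-r}\Big)\sum_{n=1}^{\infty}|a_n|^2r^{2n}\le(1-a^2)\frac{r}{1-r}. \]
I would verify that this is saturated by the disk automorphism $\phi_a(\zeta)=\frac{a-\zeta}{1-a\zeta}$, whose coefficients satisfy $|a_n|=(1-a^2)a^{n-1}$ for $n\ge 1$: summing the two geometric-type series and simplifying collapses the left-hand side exactly to $(1-a^2)\frac{r}{1-r}$. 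With this estimate in hand, both inequalities reduce to controlling the constant-type term $|f(\zeta)|$ (respectively $|f(\zeta)|^2$) by Schwarz--Pick, and sharpness will come from $\phi_a$ evaluated at $\zeta=-r$, where $|\phi_a(-r)|=\frac{a+r}{1+ar}$ turns Schwarz--Pick into an equality as well.

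For the first assertion I would substitute the Schwarz--Pick bound and the displayed estimate into $A(\zeta,r)$ to obtain $A(\zeta,r)\le\frac{a+r}{1+ar}+(1-a^2)\frac{r}{1-r}$, and then show the right-hand side is at most $1$. Using the identity $1-\frac{a+r}{1+ar}=\frac{(1-a)(1-r)}{1+ar}$ and cancelling the common factor $1-a>0$, the requirement becomes $(1+a)r(1+ar)\le(1-r)^2$, i.e.
\[ (a^2+a-1)r^2+(a+3)r-1\le 0. \]
The discriminant of this quadratic is the perfect square $5(1+a)^2$, so its positive root is exactly $r_{a_0}=\frac{2}{(3+a)+\sqrt5\,(1+a)}$, and a short sign check shows the inequality holds for all $r\le r_{a_0}$. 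Sharpness follows by evaluating $\phi_a$ at $\zeta=-r$, where both ingredients are equalities, so that $A(-r,r)=\frac{a+r}{1+ar}+(1-a^2)\frac{r}{1-r}$ crosses $1$ precisely at $r_{a_0}$. Finally, writing $r_{a_0}=\frac{2}{(3+\sqrt5)+a(1+\sqrt5)}$ exhibits it as a strictly decreasing function of $a$ equal to $\sqrt5-2$ at $a=1$; since $a=|a_0|<1$, this gives $r_{a_0}>\sqrt5-2$.

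For the second assertion the same substitution, now with the squared Schwarz--Pick bound, gives $B(\zeta,r)\le\big(\frac{a+r}{1+ar}\big)^2+(1-a^2)\frac{r}{1-r}$. Here I would use $1-\big(\frac{a+r}{1+ar}\big)^2=\frac{(1-a^2)(1-r^2)}{(1+ar)^2}$, cancel the factor $1-a^2$, and reduce $B\le 1$ to
\[ r(1+ar)^2\le(1-r)^2(1+r). \]
The borderline identity $r(1+ar)^2=(1-r)^2(1+r)$ is a cubic in $r$ whose associated function $g(r):=(1-r)^2(1+r)-r(1+ar)^2$ satisfies $g(0)=1>0$ and changes sign once on $(0,1)$, so its unique positive root is the threshold $r_{a_0}'$, and sharpness again comes from $\phi_a$ at $\zeta=-r$. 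The two-sided bound is then a pair of sign evaluations: one checks directly that $g(1/3)=\frac{7-6a-a^2}{27}>0$ for $a\in(0,1)$, forcing $r_{a_0}'>1/3$, while $g(1/(2+a))=\frac{(1+a)^2(a-1)}{(2+a)^3}<0$, forcing $r_{a_0}'<1/(2+a)$.

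The step I expect to be the main obstacle is establishing the sharp coefficient estimate displayed in the first paragraph from the system of inequalities in \cite[Lemma 4]{LLP}; once that single lemma is secured, everything else is an elementary reduction to a quadratic or a cubic whose root and sign behaviour can be read off directly. The one point requiring care is that the extremal function $\phi_a$ must saturate the Schwarz--Pick bound and the coefficient estimate simultaneously---which it does, because the coefficient estimate is an \emph{equality} for $\phi_a$ at every $\zeta$ with $|\zeta|=r$, while Schwarz--Pick becomes an equality exactly at the point $\zeta=-r$ where $|f|$ is maximal---so both radii are genuinely best possible.
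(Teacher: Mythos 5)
Your argument is correct and follows essentially the same route the paper takes for its multidimensional analogue (Theorem \ref{Th-2.2} with $n=1$): Schwarz--Pick for the $|f|$ (resp.\ $|f|^{2}$) term, the refined Carlson-type coefficient estimate (the paper's Lemma \ref{Lem5} with $N=1$ is exactly your displayed inequality, and it is indeed an identity for $\phi_a$), reduction to the quadratic $(a^{2}+a-1)r^{2}+(3+a)r-1\le 0$ and to the cubic inequality $r(1+ar)^{2}\le(1-r)^{2}(1+r)$, and sharpness from $\phi_a$ at $\zeta=-r$. Your sign evaluations at $r=1/3$ and $r=1/(2+a)$ coincide with the paper's computations of $B_4$, and your uniform treatment of the quadratic root in the form $2/\bigl((3+a)+\sqrt{5}(1+a)\bigr)$ neatly avoids the paper's case split at $|a_0|=(\sqrt{5}-1)/2$.

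One discrepancy to flag rather than pass over: your cubic expands to $(1-|a_0|^{2})r^{3}-(1+2|a_0|)r^{2}-2r+1$, whereas the statement writes $(1-|a_0|^{3})r^{3}$ in the leading term. The coefficient $1-|a_0|^{2}$ is the one forced by the extremal computation (for $\phi_a$ at $\zeta=-r$ one gets $B-1=\frac{(1-a^{2})\left(r(1+ar)^{2}-(1-r)^{2}(1+r)\right)}{(1-r)(1+ar)^{2}}$ exactly), and it is also the coefficient appearing in the paper's own $B_4$; so the $|a_0|^{3}$ in the quoted statement appears to be a typo. You should say so explicitly instead of silently identifying the positive root of your cubic with the stated $r'_{a_0}$.
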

Motivated by the work of Kayumov and Ponnusamy \cite{Kayumov-Ponnusamy-CRMASP-2018}, Bohr-type inequalities for the family $\mathcal{B}$ were investigated in \cite{Liu-Shang-Xu-JIA-2018}. This investigation considered several forms where the Taylor coefficients in the classical Bohr inequality are partially or entirely replaced by higher-order derivatives of $f$. We recall only two of these results here.
\begin{theoD} \cite[Theorem 2.1]{Liu-Shang-Xu-JIA-2018} Suppose that $f\in \mathcal{B}$ and $f({ \zeta})=\sum_{n=0}^{\infty} a_n \zeta^n.$ Then the following sharp inequality holds:
\begin{align*}
|f({ \zeta})|+|f'({ \zeta})|r+\sum_{n=2}^{\infty} |a_n|r^n\leq 1\quad \text{for}\;|{ \zeta}|=r\leq \frac{\sqrt{17}-3}{4}.
\end{align*}
\end{theoD}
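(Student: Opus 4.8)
The plan is to reduce the three-term left-hand side to an inequality in the single real parameter $a:=|a_0|=|f(0)|$ together with the radius $r$, exploiting the fact that $|f(\zeta)|$ and $|f'(\zeta)|$ cannot be maximized independently. First I would record the standard estimates for $f\in\mathcal B$ on $|\zeta|=r$: the Schwarz--Pick bound $|f(\zeta)|\le \frac{a+r}{1+ar}$, the Schwarz--Pick derivative estimate $|f'(\zeta)|\le \frac{1-|f(\zeta)|^2}{1-r^2}$, and the coefficient inequality $|a_n|\le 1-a^2$ for $n\ge 1$.

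The crucial first step is to combine the first two estimates rather than bound $|f'|$ by its coefficient series. Writing $s=|f(\zeta)|$, the quantity $|f(\zeta)|+|f'(\zeta)|r$ is dominated by $\phi(s):=s+\frac{(1-s^2)r}{1-r^2}$; since $\phi'(s)=1-\frac{2sr}{1-r^2}\ge 0$ whenever $r\le\sqrt2-1$ (which covers the range in question), $\phi$ is increasing on $[0,1]$ and I may replace $s$ by its maximal value $\frac{a+r}{1+ar}$. Using the identity $1-\left(\frac{a+r}{1+ar}\right)^2=\frac{(1-a^2)(1-r^2)}{(1+ar)^2}$, this collapses to $|f(\zeta)|+|f'(\zeta)|r\le \frac{a+r}{1+ar}+\frac{(1-a^2)r}{(1+ar)^2}$. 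This is the heart of the matter: the naive bound $|f'(\zeta)|\le (1-a^2)/(1-r)^2$ is far too lossy and yields a strictly smaller radius, whereas the $f$--$f'$ coupling in Schwarz--Pick restores sharpness. Adding the geometric tail bound $\sum_{n\ge 2}|a_n|r^n\le (1-a^2)\frac{r^2}{1-r}$ reduces everything to showing
\[
\Psi(a,r):=\frac{a+r}{1+ar}+\frac{(1-a^2)r}{(1+ar)^2}+\frac{(1-a^2)r^2}{1-r}\le 1 .
\]

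To finish I would subtract $1$ and extract the common factor $1-a$, using $\frac{a+r}{1+ar}-1=-\frac{(1-a)(1-r)}{1+ar}$ and $1-a^2=(1-a)(1+a)$, so that $\Psi(a,r)-1=(1-a)\,T(a,r)$ with
\[
T(a,r)=-\frac{1-r}{1+ar}+\frac{(1+a)r}{(1+ar)^2}+\frac{(1+a)r^2}{1-r}.
\]
A direct differentiation shows each summand of $T$ is increasing in $a$ on $[0,1]$ for $r<1/3$ (the only nonobvious sign, that of $1-(2+a)r$ in the middle term, is positive since $(2+a)r\le 3r<1$), so $\max_{a\in[0,1]}T(a,r)=T(1,r)$. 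Clearing denominators gives $T(1,r)=\frac{(2r^2+3r-1)(r^2+1)}{(1+r)^2(1-r)}$, which is $\le 0$ exactly when $2r^2+3r-1\le 0$, i.e. $r\le\frac{\sqrt{17}-3}{4}$.

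Sharpness comes from the Blaschke factors $f_a(\zeta)=\frac{a-\zeta}{1-a\zeta}$ evaluated at $\zeta=-r$: a Taylor expansion in $a=1-\varepsilon$ gives $\Phi(f_a,-r)-1\sim \varepsilon\,\frac{(2r^2+3r-1)(r^2+1)}{(1+r)^2(1-r)}$, which is positive for every $r>\frac{\sqrt{17}-3}{4}$ once $\varepsilon$ is small, so the radius cannot be enlarged. The main obstacle I anticipate is precisely the first step: recognizing that one must couple $|f'|$ to $|f(\zeta)|$ through the Schwarz--Pick derivative estimate instead of bounding it by its coefficient series. Once that reduction is in place, what remains is the careful but routine monotonicity-in-$a$ argument and the factorization that reveals the quadratic $2r^2+3r-1$.
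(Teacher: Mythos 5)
Your proof is correct, and the paper does not actually prove Theorem D (it is quoted from Liu--Shang--Xu), but your argument is essentially the same one the paper itself runs for the multidimensional analogue in Theorem 2.3: Schwarz--Pick for $|f|$, the derivative bound $|f'|\le\frac{1-|f|^2}{1-r^2}$ coupled through the increasing map $X\mapsto X+\lambda(1-X^2)$, the coefficient tail bound, and a final factorization producing the quadratic $2r^2+3r-1$ whose root is $\frac{\sqrt{17}-3}{4}$, with sharpness from the Blaschke factor as $a\to1^-$. Nothing further is needed.
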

\begin{theoE} \cite[Theorem 2.2, Corollary 2.3]{Liu-Shang-Xu-JIA-2018} Suppose that $N(\ge 2)$ is an integer, 
$f({ \zeta})=\sum_{k=0}^{\infty} a_k { \zeta}^k$ is analytic in $\mathbb{D}$ and $|f({ \zeta})|<1$ in $\mathbb{D}$. Then
\begin{align*}
|f({ \zeta})| + \sum_{k=N}^{\infty} \left| \frac{f^{(k)}({ \zeta})}{k!} \right| |{ \zeta}|^{k}\le 1
\quad \text{for } |{ \zeta}|=r \le R_{N},
\end{align*}
where $R_{N}$ is the minimum positive root of the equation
\begin{align*}
\psi_{N}(r)= (1+r)(1-2r)(1-r)^{\,N-1} - 2r^{N}= 0.
\end{align*}
The radius $R_{N}$ is the best possible. Moreover,
\begin{align*}
|f({ \zeta})|^{2}+ \sum_{k=N}^{\infty} \left| \frac{f^{(k)}({ \zeta})}{k!} \right| |z|^{k}\le 1
\quad \text{for } |{ \zeta}| = r \le R_{N}^{*},
\end{align*}
where $R_{N}^{*}$ is the positive root of the equation
\begin{align*}
(1+r)(1-2r)(1-r)^{\,N-1} - r^{N} = 0.
\end{align*}
The radius $R_{N}^{*}$ is the best possible.
\end{theoE}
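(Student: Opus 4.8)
The plan is to reduce both inequalities to a single one-variable estimate in the quantity $b:=|f(\zeta)|$, using the sharp estimate of Ruscheweyh for the derivatives of a bounded analytic self-map of $\mathbb{D}$,
\[
\frac{|f^{(k)}(\zeta)|}{k!}\le \frac{1-|f(\zeta)|^2}{(1-r^2)(1-r)^{k-1}},\qquad k\ge 1,\ r=|\zeta|,
\]
which for $k=1$ is precisely the Schwarz--Pick inequality and is attained at each point by a disk automorphism. Since $k\ge N\ge 2$ throughout, this applies to every term of the tail. First I would multiply by $r^k$ and sum the resulting geometric series. Writing
\[
S(r):=\frac{r^N}{(1+r)(1-2r)(1-r)^{N-1}},
\]
a direct computation gives, for $r<1/2$,
\[
\sum_{k=N}^{\infty}\frac{|f^{(k)}(\zeta)|}{k!}\,r^k\le \frac{1-b^2}{1-r^2}\sum_{k=N}^{\infty}\frac{r^k}{(1-r)^{k-1}}=(1-b^2)\,S(r).
\]
The point of the bookkeeping is that $S(r)=\tfrac12$ is exactly $\psi_N(r)=0$, while $S(r)=1$ is exactly $(1+r)(1-2r)(1-r)^{N-1}-r^N=0$.

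For the linear version the left-hand side is at most $b+(1-b^2)S(r)$, and since $1-b>0$ for a non-constant $f$, the inequality $b+(1-b^2)S(r)\le1$ is equivalent to $(1+b)S(r)\le1$; as $b<1$ this holds as soon as $2S(r)\le1$, that is $\psi_N(r)\ge0$. For the quadratic version the bound $b^2+(1-b^2)S(r)\le1$ is equivalent to $(1-b^2)\bigl(1-S(r)\bigr)\ge0$, hence to $S(r)\le1$. Because $\psi_N(0)=1>0$ and $\psi_N(1/2)<0$ (and similarly for the second polynomial), the least positive roots $R_N$ and $R_N^*$ lie in $(0,1/2)$, so $\psi_N\ge0$ on $[0,R_N]$ and $S\le1$ on $[0,R_N^*]$; this proves both inequalities, and the constraint $r<1/2$ needed for the series is automatic. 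The factor $2$ separating the two radii is exactly the gap between $(1+b)S\le1$ (worst case $b\to1$) and $S\le1$.

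For sharpness I would test the family $\varphi_a(\zeta)=\dfrac{a-\zeta}{1-a\zeta}=\dfrac{1}{a}-\dfrac{1-a^2}{a}\cdot\dfrac{1}{1-a\zeta}$, $a\in(0,1)$, at the real point $\zeta=r$. This form makes the derivatives transparent and yields the exact values
\[
|\varphi_a(r)|=\frac{a-r}{1-ar},\qquad \sum_{k=N}^{\infty}\frac{|\varphi_a^{(k)}(r)|}{k!}\,r^k=\frac{(1-a^2)a^{N-1}r^N}{(1-ar)^N(1-2ar)}.
\]
Setting $a=1-\varepsilon$ and expanding to first order in $\varepsilon$, the linear functional equals $1+\varepsilon\bigl[\tfrac{2r^N}{(1-r)^N(1-2r)}-\tfrac{1+r}{1-r}\bigr]+o(\varepsilon)$, whose bracket is positive exactly when $\psi_N(r)<0$, i.e. for $r>R_N$; the quadratic functional expands identically but with $\tfrac{2(1+r)}{1-r}$ in place of $\tfrac{1+r}{1-r}$, giving failure precisely for $r>R_N^*$. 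Thus both inequalities break down immediately beyond the stated radii, which is best possibility.

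The only external ingredient is the Ruscheweyh estimate, so the main point to pin down is its statement and its pointwise sharpness; everything afterward is the geometric summation plus the elementary one-variable analysis above. The one thing to check with care is that $R_N$ (resp. $R_N^*$) is a simple least root, so that the polynomial genuinely changes sign there and the family $\varphi_a$ detects the failure for every $r$ slightly larger than the radius.
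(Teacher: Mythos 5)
Your proposal is correct and follows essentially the same route the paper takes for its multidimensional analogue (Theorem \ref{Th-2.4}): the higher-order Schwarz--Pick (Ruscheweyh) estimate, summation of the geometric series to produce $S(r)=\frac{r^N}{(1+r)(1-2r)(1-r)^{N-1}}$, reduction to a scalar inequality in $|f(\zeta)|$, and sharpness via the automorphisms $\varphi_a$ with $a\to 1^{-}$. Your direct verification that $b+(1-b^2)S\le 1$ for all $b\in[0,1)$ once $2S\le 1$ is a mild streamlining of the paper's detour through the bound $|f(z)|\le\frac{|a_0|+r}{1+|a_0|r}$ and the ensuing case analysis, but it is not a genuinely different argument.
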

Liu \emph{et al.} \cite{LLP} presented an improved version of Theorem D as follows.
\begin{theoF}\cite[Theorem 7]{LLP} Suppose that $f\in \mathcal{B}$ and $f({ \zeta})=\sum_{n=0}^{\infty} a_n \zeta^n.$
Then
\begin{align*}
I(\zeta, r):=|f({ \zeta})| + |f'({ \zeta})|\, r + \sum_{n=2}^{\infty} |a_n| r^n+ \left( \frac{1}{1+|a_{0}|} + \frac{r}{1-r} \right)\sum_{n=1}^{\infty} |a_n|^{2} r^{2n}\le 1
\end{align*}
for $|{\zeta}|=r \le \frac{\sqrt{17}-3}{4}$, and the constant $\frac{\sqrt{17}-3}{4}$ is best possible.
Moreover,
\begin{align*}
J({ \zeta}, r):=|f({ \zeta})|^{2} + |f'({ \zeta})|\, r + \sum_{n=2}^{\infty} |a_n| r^n+ \left( \frac{1}{1+|a_{0}|} + \frac{r}{1-r} \right)\sum_{n=1}^{\infty} |a_n|^{2} r^{2n}\le 1
\end{align*}
for $|{\zeta}|=r \le r_{0}$, where $r_{0}\approx 0.385795$ is the unique positive root of the equation
\begin{align*}
1 - 2r - r^{2} - r^{3} - r^{4} = 0,
\end{align*}
and $r_{0}$ is best possible.
\end{theoF}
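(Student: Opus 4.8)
The plan is to handle both displayed inequalities by the same two–step scheme: bound the ``value plus radial–derivative'' block $|f(\zeta)|+|f'(\zeta)|r$ (respectively $|f(\zeta)|^{2}+|f'(\zeta)|r$) by a closed expression in $a:=|a_0|$ and $r$ via the Schwarz--Pick inequalities, bound the remaining ``Bohr tail plus quadratic correction'' block by the strengthened coefficient inequality of Liu \emph{et al.}, and then reduce the whole problem to a single two–variable inequality whose worst case is $a=1$. After a harmless rotation I may assume $a_0=a\in[0,1)$, since this preserves $|f|$, $|f'|$ and every $|a_n|$.

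For the first block I would write $t=|f(\zeta)|$ and combine $|f(\zeta)|\le\frac{a+r}{1+ar}$ with the Schwarz--Pick estimate $|f'(\zeta)|\le\frac{1-t^{2}}{1-r^{2}}$. Since the radii in question lie below $\sqrt2-1$, the map $t\mapsto t+\frac{(1-t^{2})r}{1-r^{2}}$ (respectively $t\mapsto t^{2}+\frac{(1-t^{2})r}{1-r^{2}}$) is increasing on $[0,1]$, hence maximised at $t=\frac{a+r}{1+ar}$. Using $1-\left(\frac{a+r}{1+ar}\right)^{2}=\frac{(1-a^{2})(1-r^{2})}{(1+ar)^{2}}$ this yields
\[
|f(\zeta)|+|f'(\zeta)|r\le\frac{a+r}{1+ar}+\frac{(1-a^{2})r}{(1+ar)^{2}},
\]
and the same computation with $\left(\frac{a+r}{1+ar}\right)^{2}$ in place of $\frac{a+r}{1+ar}$ for the $J$-inequality.

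For the second block I would invoke the $N=2$ instance of the refined coefficient system that already underlies Theorems B and C (\cite[Lemma 4]{LLP}),
\[
\sum_{n=2}^{\infty}|a_n|r^{n}+\left(\frac{1}{1+a}+\frac{r}{1-r}\right)\sum_{n=1}^{\infty}|a_n|^{2}r^{2n}\le(1-a^{2})\frac{r^{2}}{1-r},
\]
which matches \emph{exactly} the last two blocks of $I(\zeta,r)$ and $J(\zeta,r)$. Adding the two bounds collapses the left–hand side of the $I$-inequality to
\[
I(\zeta,r)\le\frac{a+r}{1+ar}+\frac{(1-a^{2})r}{(1+ar)^{2}}+(1-a^{2})\frac{r^{2}}{1-r}=:\Phi(a,r),
\]
and similarly gives a function $\Psi(a,r)$ for $J$. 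It then remains to show $\Phi(a,r)\le1$ for $r\le\frac{\sqrt{17}-3}{4}$ and $\Psi(a,r)\le1$ for $r\le r_{0}$. Clearing denominators, $\Phi(a,r)\le1$ becomes $(1-2r-ar^{2})(1-r)\ge(1+a)r^{2}(1+ar)^{2}$; the left side decreases in $a$ while the right side increases in $a$, so the binding case is $a=1$, where it factors as $(2r^{2}+3r-1)(r^{2}+1)\le0$, i.e.\ $r\le\frac{\sqrt{17}-3}{4}$. Likewise $\Psi(a,r)\le1$ reduces to $r^{2}(1+ar)^{2}\le(1-r)(1-r-r^{2})$, whose left side increases in $a$; at $a=1$ this is precisely $1-2r-r^{2}-r^{3}-r^{4}\ge0$, i.e.\ $r\le r_{0}$.

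I expect the main obstacle to be twofold. First, one must apply the coefficient inequality in exactly the asymmetric form above (Bohr sum from $n=2$, quadratic correction from $n=1$): it is this precise pairing that lets the correction term be absorbed for free and keeps the radius identical to that of Theorem~D, so verifying that \cite[Lemma 4]{LLP} supplies this instance is the crux of the absorption. Second, and more delicate, is sharpness. I would test the automorphism $f_{a}(\zeta)=\frac{a+\zeta}{1+a\zeta}$ at $\zeta=r$ and let $a\to1^{-}$; a direct computation shows that at this point \emph{both} Schwarz--Pick estimates hold with equality while the coefficient inequality becomes asymptotically tight, so $I(r,r)\to1$ (respectively $J(r,r)\to1$) at the critical radius, and a first–order expansion in $1-a$ shows the quantity exceeds $1$ once $r$ passes $\frac{\sqrt{17}-3}{4}$ (respectively $r_{0}$). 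Confirming that all three ingredients are simultaneously extremal for $f_{a}$ is what ultimately pins down the constants as best possible.
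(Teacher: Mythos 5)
Your proposal is correct and follows essentially the same route as the paper's own argument (carried out there for the multidimensional analogue, Theorem \ref{Th-2.3}): Schwarz--Pick for the derivative block, monotonicity of $X\mapsto X+\lambda(1-X^{2})$ (resp.\ $X^{2}+\lambda(1-X^{2})$) to insert the bound $\frac{a+r}{1+ar}$, the $N=2$ instance of the refined coefficient lemma for the tail, reduction to the worst case $a\to 1$ yielding the factorizations $(2r^{2}+3r-1)(r^{2}+1)$ and $1-2r-r^{2}-r^{3}-r^{4}$, and sharpness via the automorphism $\frac{a+\zeta}{1+a\zeta}$ as $a\to1^{-}$. All the intermediate identities you record match the paper's computations specialized to $n=1$.
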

{ For other results on Bohr-Rogosinski radius in one complex variable, see \textit{e.g.} \cite{Ahamed-RM-2023,Ahamed-Allu-BMMSS-2022,Ahamed-Allu-CMB-2023,Allu-Arora-JMAA-2023,Das-JMAA-2022,Gangania-Kumar-MJM-2022,Hamada-AAMP-2025}.\vspace{1.2mm}
	
	It would be interesting to study the Bohr-Rogosinski radius for functions with values in the polydisk in several complex variables. It is natural to raise the following question:
	\begin{prob}
		Can we establish the multidimensional versions of Theorems B, C, E, and F?
	\end{prob}
	In this paper, our aim is to present an affirmative answer to this problem. The organization of the paper is as follows: First, we discuss about some basic Notations and terminology in several complex variables. In Section \ref{Sec-2}, we present our main results. In Section \ref{Sec-3}, we present some key lemmas which will play a key role in proving the results of this paper. Finally, in Section \ref{Sec-4}, we present the proofs of our main results.}
\subsection{\bf Basic Notations in several complex variables}\label{Sub-Sec-1.3}
For $z=(z_1,\ldots,z_n)$ and $w=(w_1,\ldots,w_n)$ in $\mathbb{C}^{n}$, we denote $\langle z,w\rangle=z_1\ol w_1+\ldots+z_n \ol w_n$ and $||z||=\sqrt{\langle z,z\rangle}$. The absolute value of a complex number $z_1$ is denoted by $|z_1|$ and for $z\in\mathbb{C}^n$, we define
\begin{align*}
	||z||_{\infty}=\max\limits_{1\leq i\leq n}|z_i|.
\end{align*}

An open polydisk (or open polycylinder) in $\mathbb{C}^n$ is a subset $\mathbb{P}\Delta(a;r)\subset \mathbb{C}^n$ of the form 
\[\mathbb{P}\Delta(a;r)=\prod\limits_{j=1}^n \Delta(a_j;r_j)=\lbrace z\in\mathbb{C}^n: |z_i-a_i|<r_i,\;i=1,2,\ldots,n\rbrace,\]
the point $a=(a_1,\ldots,a_n)\in\mathbb{C}^n$ is called the centre of the polydisk and $r=(r_1,\ldots,r_n)\in\mathbb{R}^n\;(r_i>0)$ is called the polyradius. It is easy to see that
\begin{align*}
	\mathbb{P}\Delta(0;1)=\mathbb{P}\Delta(0;1_n)=\prod\limits_{j=1}^n \Delta(0_n;1_n).
\end{align*}

The closure of $\mathbb{P}\Delta(a;r)$ will be called the closed polydisk with centre $a$ and polyradius $r$ and will be denoted by $\ol{\mathbb{P}\Delta}(a;r)$. We denote by $C_k(a_k;r_k)$ the boundary of $\Delta(a_k;r_k)$, \textit{i.e.,} the circle of radius $r_k$ with centre $a_k$ on the $z_k$-plane. Of course $C_k(a_k,r_k)$ is represented by the usual parametrization 
\begin{align*}
	\theta_k\to \gamma(\theta_k)=a_k+r_ke^{i\theta_k},\; \mbox{where}, 0\leq \theta_k\leq 2\pi.
\end{align*} The product $C^n(a;r):=C_1(a_1;r_1)\times\ldots\times C_n(a_n;r_n)$ is called the determining set of the polydisk $\mathbb{P}\Delta(a;r)$.

\smallskip
A multi-index $\alpha=(\alpha_1,\ldots,\alpha_n)$ of dimension $n$ consists of n non-negative integers $\alpha_j,\;1\leq j\leq n$; the degree of a multi-index $\alpha$ is the sum $|\alpha|=\sum_{j=1}^n \alpha_j$ and we denote $\alpha!=\alpha_1!\ldots \alpha_n!$. For $z=(z_1,\ldots,z_n)\in\mathbb{C}^n$ and a multi-index $\alpha=(\alpha_1,\ldots,\alpha_n)$, we define 
\[z^{\alpha}=\prod\limits_{j=1}^n z_j^{\alpha_j}\;\;\text{and}\;\;|z|^{\alpha}=\prod\limits_{j=1}^n |z_j|^{\alpha_j}.\]

For two multi-indexes $\alpha=(\alpha_1,\ldots,\alpha_n)$ and $\nu=(\nu_1,\ldots,\nu_n)$, we define $\nu^{\alpha}=\nu_1^{\alpha_1}\ldots \nu_n^{\alpha_n}.$ 
Let $f(z)$ be a holomorphic function in a domain $\Omega\subset \mathbb{C}^n$, and $c=(c_1,\ldots,c_n)\in \Omega$. Then in a polydisk $\mathbb{P}\Delta(c;r)\subset \Omega$ with centre $c$, $f(z)$ has a power series expansion in $z_1-c_1,\ldots,z_n-c_n$,
\begin{align*}
	f(z)&=\sum\limits_{\alpha_1,\alpha_2,\ldots,\alpha_n=0}^{\infty} a_{\alpha_1,\alpha_2,\ldots,\alpha_n}(z_1-c_1)^{\alpha_1}(z_2-c_2)^{\alpha_2}\ldots (z_n-c_n)^{\alpha_n}\\&=
	\sum\limits_{|\alpha|=0} a_{\alpha}(z-c)^{\alpha}=\sum\limits_{|\alpha|=0}^{\infty} P_{|\alpha|}(z-c),
\end{align*}
which is absolutely convergent in $\mathbb{P}\Delta(c;r)$, where the term $P_k(z-c)$ is a homogeneous polynomial of degree $k$. \vspace{1.2mm}

One of our aims is to establish multidimensional analogues of Theorems D, E, and F. For this purpose, we consider $f$ to be a holomorphic function in $\Omega \subset \mathbb{C}^n$ and define the Euler operator $D$ as:$$Df(z) := \sum_{k=1}^n z_k\frac{\partial f(z)}{\partial z_k}.$$This operator is also known as the Euler derivative, the total derivative, or the radial derivative.
\section{{\bf Main Results}}\label{Sec-2}
We first state the multidimensional version of Theorem B.
\begin{theo}\label{Th-2.1} Let $f(z)=\sum_{|\alpha|=0} a_{\alpha}z^{\alpha}$ be a holomorphic function in the polydisk $\mathbb{P}\Delta(0;1_n)$ such that $|f(z)|\leq 1$ for all $z\in \mathbb{P}\Delta(0;1/n)$. Suppose $z=(z_1,\ldots,z_n)\in \mathbb{P}\Delta(0;1/n)$ and $r=(r_1,r_2,\ldots,r_n)$ such that $||z||_{\infty}={\bf r}$. For $n\geq N$, let $t=\left[\frac{N-1}{2} \right]$. Then
\begin{align*}
\mathcal{A}_1(z, \bf r):&=|f(z)|+\sum_{k=N}^{\infty}\sum\limits_{|\alpha|=k}|a_{\alpha}|\, r^{\alpha} + \operatorname{sgn}(t)\sum\limits_{k=1}^t\sum\limits_{|\alpha|=k}|a_{\alpha}|^2 \frac{{\bf r}^{N}}{1-{\bf r}}\\&\quad+\left( \frac{1}{1+|a_0|}+\frac{{\bf r}}{1-{\bf r}}\right)
\sum_{k=t+1}^{\infty} \sum\limits_{|\alpha|=k}|a_{\alpha}|^2 r^{2\alpha}\leq 1
\end{align*}
for $n{\bf r}\leq R_{n,N}$, where $R_{n,N}$ is the positive root of the equation $\psi_N(n{\bf r})=0$, 
$\psi_N(n{\bf r})=2(1 + n{\bf r}) (n{\bf r})^{N}-(1-n{\bf r})^{2}$. The radius $R_{n,N}$ is best possible. Moreover,
\begin{align*}
\mathcal{A}_2(z, \bf r):&=|f(z)|^2+\sum_{k=N}^{\infty}\sum\limits_{|\alpha|=k}|a_{\alpha}|\, r^{\alpha} + \operatorname{sgn}(t)\sum\limits_{k=1}^t\sum\limits_{|\alpha|=k}|a_{\alpha}|^2 \frac{{\bf r}^{N}}{1-{\bf r}}\\&\quad+\left( \frac{1}{1+|a_0|}+\frac{{\bf r}}{1-{\bf r}}\right)
\sum_{k=t+1}^{\infty} \sum\limits_{|\alpha|=k}|a_{\alpha}|^2 r^{2\alpha}\leq 1
\end{align*}
for $n{\bf r}\leq R'_{n,N}$, where $R'_{n,N}$ is the positive root of the equation $(1 + n{\bf r})(n{\bf r})^{N}-(1-n{\bf r})^{2} = 0$. The radius $R'_N$ is best possible.
\end{theo}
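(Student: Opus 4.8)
The plan is to deduce the $n$-dimensional inequality from its one-variable prototype, Theorem~B, by restricting $f$ to complex lines through the origin and by establishing the multidimensional coefficient inequalities that replace the sharp Schur bound $|a_n|\le 1-|a_0|^2$. Fix $z=(z_1,\dots,z_n)$ with $\|z\|_\infty=\mathbf r$, put $R:=n\mathbf r<1$, and consider the one-variable slice
\begin{align*}
\varphi(\zeta):=f(\zeta z_1,\dots,\zeta z_n),\qquad \varphi(\zeta)=\sum_{k=0}^{\infty}P_k(z)\,\zeta^{k},\quad P_k(z)=\sum_{|\alpha|=k}a_\alpha z^{\alpha}.
\end{align*}
If $|\zeta|<1/(n\mathbf r)$ then $|\zeta z_j|=|\zeta|\,|z_j|<1/n$ for every $j$, so $\varphi$ maps the disk $\{|\zeta|<1/R\}$ into $\overline{\mathbb D}$ and $\Phi(\zeta):=\varphi(\zeta/R)$ is a Schur function on $\mathbb D$ with $\Phi(0)=a_0$. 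Reading off the classical bounds for $\Phi$ gives at once the Schwarz--Pick estimate
\begin{align*}
|f(z)|=|\Phi(R)|\le \frac{|a_0|+R}{1+|a_0|R},
\end{align*}
together with $|P_k(z)|\le (1-|a_0|^2)R^{k}$ for every $k\ge1$.

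The decisive step is to upgrade the last estimate from the modulus of the homogeneous polynomial to the full sum of the moduli of its coefficients, namely
\begin{align*}
\sum_{|\alpha|=k}|a_\alpha|\,r^{\alpha}\le (1-|a_0|^2)\,R^{k},\qquad k\ge1 .
\end{align*}
I would prove this in Section~\ref{Sec-3} by applying the slice construction to the whole family of points $(\varepsilon_1 z_1,\dots,\varepsilon_n z_n)$ with $|\varepsilon_j|=1$ and choosing the phases $\varepsilon_j$ to render the monomials $a_\alpha z^{\alpha}$ positively aligned, the polyradius $1/n$ being exactly what keeps the rescaled slice a Schur function. For the quadratic corrections I would use Parseval's identity on the determining set $C^{n}(0;\rho)$: since $|f|\le1$ on $\mathbb P\Delta(0;1/n)$, letting $\rho\uparrow(1/n,\dots,1/n)$ yields $\sum_{|\alpha|\ge1}|a_\alpha|^{2}r^{2\alpha}\le 1-|a_0|^{2}$ together with the companion bound controlling $\sum_{k=1}^{t}\sum_{|\alpha|=k}|a_\alpha|^{2}$. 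These are precisely the multidimensional analogues of the coefficient system of \cite[Lemma 4]{LLP} underlying Theorem~B.

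Once these three families of estimates are in force, the assembly is mechanical and parallels the one-variable argument with $r$ replaced throughout by $R=n\mathbf r$. Substituting the bounds into $\mathcal A_1(z,\mathbf r)$ shows that its leading sum, its $\operatorname{sgn}(t)$-sum (where the scaling $\mathbf r^{N}$ absorbs the combinatorial factor produced by $\sum_{|\alpha|=k}|a_\alpha|^{2}$), and its quadratic tail are each dominated by the corresponding pieces of the scalar expression in Theorem~B evaluated at radius $R$; note also that $\mathbf r^{N}/(1-\mathbf r)\le R^{N}/(1-R)$, which only improves the middle sum. The computation in the proof of Theorem~B then gives $\mathcal A_1(z,\mathbf r)\le1$ exactly when $R\le R_N$, i.e. when $n\mathbf r\le R_{n,N}$ with $R_{n,N}$ the root of $\psi_N(n\mathbf r)=2(1+n\mathbf r)(n\mathbf r)^{N}-(1-n\mathbf r)^{2}=0$. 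The statement for $\mathcal A_2$ is identical, the only change being the replacement of $|f(z)|$ by $|f(z)|^{2}\le\big((|a_0|+R)/(1+|a_0|R)\big)^{2}$, which moves the governing equation to $(1+n\mathbf r)(n\mathbf r)^{N}-(1-n\mathbf r)^{2}=0$ and hence the radius to $R'_{n,N}$.

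Sharpness will be witnessed by the symmetric extremal obtained from the one-variable extremizer of Theorem~B through the substitution $\zeta\mapsto z_1+\dots+z_n$, for instance $f(z)=\varphi_a(z_1+\dots+z_n)$ with $\varphi_a(\zeta)=(a-\zeta)/(1-a\zeta)$ and $a=|a_0|$: for such $f$ one computes $\sum_{|\alpha|=k}|a_\alpha|r^{\alpha}=|c_k|(n\mathbf r)^{k}$, where $c_k$ are the Taylor coefficients of $\varphi_a$, so every inequality in the chain becomes an equality at $n\mathbf r=R_{n,N}$. I expect the genuinely delicate point to be the summed coefficient estimate: the bare slice controls only $|P_k(z)|=\big|\sum_{|\alpha|=k}a_\alpha z^{\alpha}\big|$, whereas the theorem needs the larger quantity $\sum_{|\alpha|=k}|a_\alpha|\,r^{\alpha}$, so the passage from the modulus of a homogeneous polynomial to the sum of the moduli of its coefficients---which is where the normalisation by the polyradius $1/n$ must be exploited---is the crux of the whole argument.
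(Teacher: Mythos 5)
Your overall architecture coincides with the paper's: slice $f$ along complex lines to reduce to Schur functions of one variable, obtain the Schwarz--Pick bound for $|f(z)|$, establish multidimensional Carlson-type coefficient inequalities, assemble exactly as in the scalar Theorem~B with $r$ replaced by $n\mathbf r$, and certify sharpness with $f(z)=\varphi_a(z_1+\cdots+z_n)$. However, there is a genuine gap at precisely the step you single out as the crux. You propose to pass from the slice bound $|P_k(z)|\le(1-|a_0|^2)(n\mathbf r)^k$ to $\sum_{|\alpha|=k}|a_\alpha|r^\alpha\le(1-|a_0|^2)(n\mathbf r)^k$ by choosing unimodular phases $\varepsilon_1,\dots,\varepsilon_n$ that simultaneously align all monomials $a_\alpha\varepsilon^\alpha z^\alpha$ with $|\alpha|=k$. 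This is impossible in general: aligning the monomials indexed by $\alpha$ and $\beta$ imposes the congruence $(\alpha-\beta)\cdot\theta\equiv c_{\alpha\beta}\pmod{2\pi}$ on $\theta=(\arg\varepsilon_1,\dots,\arg\varepsilon_n)$, and there are $\binom{n+k-1}{k}$ monomials against only $n$ phases. Already for $n=k=2$ the three monomials $z_1^2,z_1z_2,z_2^2$ produce two conditions that both prescribe $\theta_1-\theta_2$, with generically incompatible right-hand sides (compatibility forces $2\arg a_{11}\equiv\arg a_{20}+\arg a_{02}$). So the ``decisive step'' as you describe it does not go through.

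The paper's resolution (its Lemma 4) is to accept a quantified loss rather than seek exact alignment: Parseval on the distinguished boundary gives $\sum_{|\alpha|=k}|a_\alpha|^2r^{2\alpha}\le\max_{C^n(0,r)}|P_k|^2$, and Cauchy--Schwarz then yields $\sum_{|\alpha|=k}|a_\alpha|r^\alpha\le n^{k/2}\max_{C^n(0,r)}|P_k|$; the extra factor is harmless because $n^{k/2}\mathbf r^{k}\le(n\mathbf r)^k$, which is exactly where the polyradius $1/n$ is exploited. A second, related shortfall: the single estimate $\sum_{|\alpha|=k}|a_\alpha|r^\alpha\le(1-|a_0|^2)(n\mathbf r)^k$ is too weak for the \emph{refined} inequality. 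To carry the $\operatorname{sgn}(t)$ block and the $\bigl(\frac{1}{1+|a_0|}+\frac{\mathbf r}{1-\mathbf r}\bigr)$ block while keeping the right-hand side equal to $(1-|a_0|^2)\frac{(n\mathbf r)^N}{1-n\mathbf r}$, one needs the full Carlson hierarchy, e.g. $\sum_{|\alpha|=2k}|a_\alpha|\le n^k\bigl(1-\sum_{i=0}^{k-1}\sum_{|\alpha|=i}|a_\alpha|^2-\frac{1}{1+|a_0|}\sum_{|\alpha|=k}|a_\alpha|^2\bigr)$, obtained by running the Parseval/Cauchy--Schwarz step on each inequality of Carlson's system for the slice, not merely on the Wiener-type bound. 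Once that lemma is supplied, your assembly of $\mathcal A_1$, $\mathcal A_2$ and your sharpness discussion do match the paper's proof.
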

Next we state the multidimensional version of Theorem C.
\begin{theo}\label{Th-2.2} Let $f(z)=\sum_{|\alpha|=0} a_{\alpha}z^{\alpha}$ be a holomorphic function in the polydisk $\mathbb{P}\Delta(0;1_n)$ such that $|f(z)|\leq 1$ for all $z\in \mathbb{P}\Delta(0;1/n)$. Suppose $z=(z_1,\ldots,z_n)\in \mathbb{P}\Delta(0;1/n)$ and $r=(r_1,r_2,\ldots,r_n)$ such that $||z||_{\infty}={\bf r}$. Then
\begin{align*}
\mathcal{A}_3(z, {\bf r}):=|f(z)|+\sum_{k=1}^{\infty}\sum\limits_{|\alpha|=k}|a_{\alpha}|\, r^{\alpha}+\left( \frac{1}{1+|a_0|}+\frac{{\bf r}}{1-{\bf r}}\right)
\sum_{k=1}^{\infty} \sum\limits_{|\alpha|=k}|a_{\alpha}|^2 r^{2\alpha}\leq 1\nonumber
\end{align*}
for $n{\bf r}\leq nr_{a_0} = \frac{2}{\,3 + |a_0| + \sqrt{5}\,(1 + |a_0|)\,}$. 
The radius $nr_{a_0}$ is best possible and $nr_{a_0} > \sqrt{5} - 2$.  Moreover,
\begin{align*}
\mathcal{A}_4(z, {\bf r}):=|f(z)|^2+\sum_{k=1}^{\infty}\sum\limits_{|\alpha|=k}|a_{\alpha}|\, r^{\alpha}+\left( \frac{1}{1+|a_0|}+\frac{{\bf r}}{1-{\bf r}}\right)
\sum_{k=1}^{\infty} \sum\limits_{|\alpha|=k}|a_{\alpha}|^2 r^{2\alpha}\leq 1\nonumber
\end{align*}
for ${\bf r}\leq r'_{a_0}$, where $nr'_{a_0}$ is the unique positive root of the equation
\begin{align*}
(1 - |a_0|^3) (n{\bf r}^3 - (1 + 2|a_0|)\, (n{\bf r})^2 - 2n{\bf r} + 1 = 0.
\end{align*}
The radius $nr'_{a_0}$ is best possible. Further, we have $1/3n< r'_{a_0}<1/n(2 + |a_0|)$.
\end{theo}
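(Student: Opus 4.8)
\medskip
The plan is to reduce both inequalities to the one--variable estimates of Theorem C through the dilation $R:=n\mathbf r\in(0,1)$. Write $z=\mathbf r\,w$ with $\|w\|_\infty=1$ and restrict $f$ to the complex line through $w$ by setting $h(\zeta)=f(\zeta w/n)$ for $\zeta\in\mathbb D$. Since $\|\zeta w/n\|_\infty=|\zeta|/n<1/n$, the point $\zeta w/n$ lies in $\mathbb{P}\Delta(0;1/n)$, so $h\in\mathcal B$ with $h(0)=a_0$; moreover $h(R)=f(\mathbf r\,w)=f(z)$. The Schwarz--Pick inequality then gives
\[
|f(z)|=|h(R)|\le\frac{|a_0|+R}{1+|a_0|R},
\]
and squaring supplies the bound needed for the $\mathcal A_4$ part.

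The heart of the argument is a several--variable strengthened coefficient inequality: for every admissible $f$ and $R=n\mathbf r$,
\[
\sum_{k=1}^{\infty}\sum_{|\alpha|=k}|a_\alpha|\,r^{\alpha}
+\Big(\tfrac{1}{1+|a_0|}+\tfrac{R}{1-R}\Big)\sum_{k=1}^{\infty}\sum_{|\alpha|=k}|a_\alpha|^{2}r^{2\alpha}
\le\frac{(1-|a_0|^2)R}{1-R}.
\]
This is the polydisk analogue of the one--variable inequality behind Theorem C (the strengthened system of \cite[Lemma 4]{LLP}); I would prove it by passing to $\tilde f(w)=f(w/n)$ on $\mathbb D^n$, analysing the homogeneous expansion $\tilde f=\sum_k\tilde P_k$, and bundling the linear and quadratic blocks degree by degree. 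Granting it, and using $|z|^\alpha\le\mathbf r^{|\alpha|}$ together with $\tfrac{\mathbf r}{1-\mathbf r}\le\tfrac{R}{1-R}$ (valid since $\mathbf r\le R$), the two sum--blocks of $\mathcal A_3(z,\mathbf r)$ are majorised by the right--hand side above.

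Combining the two ingredients yields
\[
\mathcal A_3(z,\mathbf r)\le\frac{|a_0|+R}{1+|a_0|R}+\frac{(1-|a_0|^2)R}{1-R},
\]
and the right--hand side is $\le1$ precisely when $(1+|a_0|)R\,(1+|a_0|R)\le(1-R)^2$; solving this quadratic inequality gives $R\le nr_{a_0}=\tfrac{2}{3+|a_0|+\sqrt5(1+|a_0|)}$, whence the condition $n\mathbf r\le nr_{a_0}$ and the estimate $nr_{a_0}>\sqrt5-2$. For $\mathcal A_4$ the same computation runs with $|f(z)|$ replaced by $|f(z)|^2\le\big((|a_0|+R)/(1+|a_0|R)\big)^2$; the threshold then reduces to the cubic $(1-|a_0|^3)R^3-(1+2|a_0|)R^2-2R+1=0$, whose positive root is $nr'_{a_0}$, and the bracketing $1/3n<r'_{a_0}<1/n(2+|a_0|)$ is the dilated form of the one--variable bound $1/3<r'_{a_0}<1/(2+|a_0|)$.

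Sharpness in both parts I would establish by dilating the one--variable extremal map, taking the single--coordinate function $f(z)=\dfrac{|a_0|-nz_1}{1-|a_0|\,nz_1}$, which is admissible on $\mathbb{P}\Delta(0;1/n)$, depends on $z_1$ alone, and at the boundary radius reduces every several--variable sum to its one--variable extremal value; hence neither $nr_{a_0}$ nor $nr'_{a_0}$ can be enlarged. I expect the main obstacle to be the strengthened coefficient inequality itself: the quantity $\sum_{|\alpha|=k}|a_\alpha|$ is an $\ell^1$ norm of the homogeneous part $\tilde P_k$ and is in general strictly larger than $\|\tilde P_k\|_{\mathbb D^n}$ and than $1-|a_0|^2$, so a bare one--variable restriction controls only the phase--aligned combination $\tilde P_k(w)$ rather than the full sum of moduli. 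The delicate point is therefore to prove the bundled linear--plus--quadratic estimate directly on the polydisk while retaining the sharp constant $1-|a_0|^2$, since it is precisely this constant that makes the radii $nr_{a_0}$ and $nr'_{a_0}$ best possible.
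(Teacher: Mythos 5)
Your overall route coincides with the paper's: bound $|f(z)|$ by the Schwarz--Pick quotient $(|a_0|+n\mathbf r)/(1+|a_0|n\mathbf r)$ (the paper's Lemma 3.1), majorise the two sum--blocks by $(1-|a_0|^2)\,n\mathbf r/(1-n\mathbf r)$ (the paper's Lemma 3.5 with $N=1$), and reduce to the quadratic $B_3(|a_0|,n\mathbf r)=(1-|a_0|-|a_0|^2)(n\mathbf r)^2-(3+|a_0|)n\mathbf r+1\ge 0$, respectively the cubic $B_4$, exactly as in the paper. Your sharpness example $f(z)=(a-nz_1)/(1-anz_1)$ is a harmless variant of the paper's $f(z)=(a+(z_1+\cdots+z_n))/(1+a(z_1+\cdots+z_n))$; both are dilated one--variable M\"obius maps and both produce the required failure for $n\mathbf r>nr_{a_0}$.

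The genuine gap is that the ``several--variable strengthened coefficient inequality'' is asserted, not proved, and you yourself identify it as the main obstacle without resolving it. It is, however, the entire technical content of the theorem: once it is available, the rest is the one--variable computation of Theorem C. Your correct observation that a slice $h(\zeta)=f(\zeta w/n)$ only controls the phase--aligned values $P_k(w)$, not the $\ell^1$ sums $\sum_{|\alpha|=k}|a_\alpha|$, is precisely the difficulty, and the missing mechanism is the one in the paper's Lemma 3.4: apply Carlson's refined inequalities to the slice $g(t)=f(zt)$ to control $|a_0|^2+\sum_{i\le k}|P_i(z)|^2+|P_{2k+1}(z)|$, then use orthogonality of monomials on the torus to get
\begin{equation*}
\sum_{|\alpha|=i}|a_\alpha|^2 r^{2\alpha}=\frac{1}{(2\pi)^n}\int_0^{2\pi}\!\!\cdots\!\int_0^{2\pi}\bigl|P_i(r_1e^{i\theta_1},\ldots,r_ne^{i\theta_n})\bigr|^2\,d\theta_1\cdots d\theta_n\le\max_{C^n(0,r)}|P_i|^2,
\end{equation*}
and the Cauchy--Schwarz inequality $\sum_{|\alpha|=k}|a_\alpha|r^{\alpha}\le n^{k/2}\bigl(\sum_{|\alpha|=k}|a_\alpha|^2r^{2\alpha}\bigr)^{1/2}$ to convert the sums of moduli into sup--norms. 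The factors $n^{k/2}$ produced here are exactly what turn $\mathbf r$ into $n\mathbf r$ and hence yield the dilated radius $nr_{a_0}$; without this step the claimed inequality, and with it the stated radius, is not established. Separately, note that the hypothesis in your key inequality uses the coefficient $\tfrac{1}{1+|a_0|}+\tfrac{R}{1-R}$ with $R=n\mathbf r$, which is stronger than the $\tfrac{1}{1+|a_0|}+\tfrac{\mathbf r}{1-\mathbf r}$ appearing in the statement; you would still owe a proof of that stronger form, whereas the paper only needs (and only proves) the weaker one.
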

Next we state the multidimensional version of Theorem E.
\begin{theo}\label{Th-2.3} Let $f(z)=\sum_{|\alpha|=0} a_{\alpha}z^{\alpha}$ be a holomorphic function in the polydisk $\mathbb{P}\Delta(0;1_n)$ such that $|f(z)|\leq 1$ for all $z\in \mathbb{P}\Delta(0;1/n)$. Suppose $z=(z_1,\ldots,z_n)\in \mathbb{P}\Delta(0;1/n)$ and $r=(r_1,r_2,\ldots,r_n)$ such that $||z||_{\infty}={\bf r}$. Then
\small{\begin{align*}
\mathcal{I}(z, {\bf r}):=|f(z)|+|Df(z)|+\sum_{k=2}^{\infty}\sum\limits_{|\alpha|=k}|a_{\alpha}|\, r^{\alpha}+\left( \frac{1}{1+|a_0|}+\frac{{\bf r}}{1-{\bf r}}\right)
\sum_{k=1}^{\infty} \sum\limits_{|\alpha|=k}|a_{\alpha}|^2 r^{2\alpha}\leq 1\nonumber
\end{align*}}
for $n{\bf r}\leq \frac{\sqrt{17}-3}{4}$ and the constant $\frac{\sqrt{17}-3}{4}$ is best possible. Moreover
\begin{align*}
\mathcal{J}(z, {\bf r}):=|f(z)|^2+|Df(z)|+\sum_{k=2}^{\infty}\sum\limits_{|\alpha|=k}|a_{\alpha}|\, r^{\alpha}+\left( \frac{1}{1+|a_0|}+\frac{{\bf r}}{1-{\bf r}}\right)
\sum_{k=1}^{\infty} \sum\limits_{|\alpha|=k}|a_{\alpha}|^2 r^{2\alpha}\leq 1\nonumber
\end{align*}
for ${\bf r}\leq r_{0}$, where $nr_0 \approx 0.385795$ is the unique positive root of the equation
\begin{align*}
1 - 2n{\bf r} - (n{\bf r})^{2} - (n{\bf r})^{3} - (n{\bf r})^{4} = 0
\end{align*}
and $nr_0$ is best possible.
\end{theo}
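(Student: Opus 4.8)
The plan is to collapse the polydisk estimate onto the one-variable inequality of Theorem F evaluated at the single radius $\rho:=n\mathbf r$. First I would pass to the rescaled function $\tilde f(w):=f(w_1/n,\dots,w_n/n)$, which is bounded by $1$ on the unit polydisk and has coefficients $\tilde a_\alpha=n^{-|\alpha|}a_\alpha$; under the normalization $\|z\|_\infty=\mathbf r$ one has $r^\alpha\le\mathbf r^{|\alpha|}$, so $\mathbf r^{k}n^{k}=\rho^{k}$ is the natural scale for the degree-$k$ block. The engine is the family of one-dimensional slices $g_\omega(\xi):=\tilde f(\omega_1\xi,\dots,\omega_n\xi)$ for $\omega\in\mathbb T^n$: each $g_\omega$ lies in $\mathcal B$ with $g_\omega(0)=a_0$, and its $k$-th Taylor coefficient is $n^{-k}\sum_{|\alpha|=k}a_\alpha\omega^\alpha$. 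Applying the classical Schwarz–Pick coefficient estimates to the $g_\omega$, and Parseval on the torus, produces the multidimensional coefficient inequalities of Section \ref{Sec-3}, which are the analogues of the sharpened system driving the proof of Theorem F.

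Next I would dispatch the three groups of terms. For $|f(z)|$ I would use the slice $g_\omega$ with $\omega_j$ the phase of $z_j$, yielding $|f(z)|\le|a_0|+(1-|a_0|^2)\rho/(1-\rho)$, exactly the one-variable Schwarz–Pick bound at radius $\rho$. For the radial derivative I would invoke the homogeneous decomposition $f=\sum_k P_k$ with $DP_k=kP_k$, so that $Df(z)=\sum_{k\ge1}kP_k(z)$ and $|Df(z)|\le\sum_{k\ge1}k\sum_{|\alpha|=k}|a_\alpha|r^\alpha$; this is the precise counterpart of $|f'(\zeta)|r\le\sum_{k\ge1}k|a_k|r^k$, reflecting that in one variable $Df(\zeta)=\zeta f'(\zeta)$, so $|Df|$ plays the role of $|f'(\zeta)|r$. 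The quadratic tail reduces cleanly: writing $b_k^2:=\sum_{|\alpha|=k}n^{-2k}|a_\alpha|^2$, Parseval gives $\sum_{k\ge1}b_k^2\le 1-|a_0|^2$, and $r^{2\alpha}\le\mathbf r^{2k}$ dominates $\sum_k\sum_{|\alpha|=k}|a_\alpha|^2 r^{2\alpha}$ by $\sum_k b_k^2\rho^{2k}$, i.e.\ a one-variable $\ell^2$ tail at radius $\rho$. Assembling these bounds dominates $\mathcal I(z,\mathbf r)$ by the one-variable quantity $I(\zeta,\rho)$ of Theorem F, so $\mathcal I(z,\mathbf r)\le 1$ for $n\mathbf r\le(\sqrt{17}-3)/4$; the statement for $\mathcal J$ is identical with $|f(z)|$ replaced by $|f(z)|^2$, whence the defining polynomial $1-2\rho-\rho^2-\rho^3-\rho^4=0$ reappears with $\rho=n\mathbf r$.

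For sharpness I would lift the one-variable extremal: if $\phi\in\mathcal B$ realizes equality in Theorem F at $r=\rho_0:=(\sqrt{17}-3)/4$, then $f(z):=\phi(nz_1)$ is holomorphic and bounded by $1$ on $\mathbb{P}\Delta(0;1/n)$ and depends only on $z_1$. Evaluating at $z=(\mathbf r,0,\dots,0)$ with $n\mathbf r=\rho_0$ collapses every multidimensional sum onto its one-variable counterpart, since only $\alpha=(k,0,\dots,0)$ contributes and $Df(z)=nz_1\phi'(nz_1)=\rho_0\phi'(\rho_0)$ reproduces $|f'|r$ exactly; equality then propagates from Theorem F, and the same lift handles the root $nr_0\approx0.385795$ for $\mathcal J$.

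The step I expect to be the main obstacle is the faithful control of the higher homogeneous blocks $\sum_{|\alpha|=k}|a_\alpha|r^\alpha$ for $k\ge 2$. Unlike the case $k=1$, the phases of the multi-indexed coefficients cannot be aligned by a single slice direction $\omega$, so the naive per-block $\ell^1$ bound $\sum_{|\alpha|=k}|a_\alpha|r^\alpha\le(1-|a_0|^2)\rho^{k}$ is not available and can in fact be violated by genuinely several-variable functions. The fix is to run these blocks through the slice and Parseval ($\ell^2$) inequalities of Section \ref{Sec-3} and to let the correction term $\bigl(\tfrac{1}{1+|a_0|}+\tfrac{\mathbf r}{1-\mathbf r}\bigr)\sum|a_\alpha|^2 r^{2\alpha}$ absorb the discrepancy; carrying this out uniformly in $k$, and simultaneously with the derivative weighting $\sum_k k(\cdots)$, is the technical heart of the argument.
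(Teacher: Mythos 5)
Your treatment of the radial-derivative term is where the argument breaks down. You bound $|Df(z)|\le\sum_{k\ge1}k\sum_{|\alpha|=k}|a_\alpha|\,r^\alpha$ by the triangle inequality on the homogeneous expansion and then hope to fold this into ``the one-variable quantity $I(\zeta,\rho)$ of Theorem F.'' But $I(\zeta,\rho)$ contains $|f'(\zeta)|r$, not $\sum_{k\ge1}k|a_k|r^k$, and the two are not interchangeable: the coefficient-sum majorant is genuinely too large at the claimed radius. Already in one variable ($n=1$), take $f(\zeta)=\frac{a+\zeta}{1+a\zeta}$ with $a=1/2$ and $r=\rho_0=\frac{\sqrt{17}-3}{4}\approx0.2808$. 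Then $|f(r)|=\frac{a+r}{1+ar}\approx0.6847$, $\sum_{k\ge1}k|a_k|r^k=\frac{(1-a^2)r}{(1-ar)^2}\approx0.2850$, and $\sum_{k\ge2}|a_k|r^k\approx0.0344$, so your majorant already exceeds $1$ before the nonnegative quadratic tail is added; by contrast $|f'(r)|r=\frac{(1-a^2)r}{(1+ar)^2}\approx0.1619$. Hence no refinement of the multidimensional coefficient inequalities can close the argument --- the loss occurs at the very first step of estimating $|Df|$, and the ``technical heart'' you identify (absorbing discrepancies into the correction term) cannot compensate for it.

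The paper's proof avoids this by never expanding $Df$ into coefficients: it invokes the high-order Schwarz--Pick lemma on the polydisk (Lemma \ref{Lem2}) to get the pointwise bound $|Df(z)|\le\frac{1-|f(z)|^2}{1-(n\mathbf{r})^2}\,n\mathbf{r}$, then uses the monotonicity of $\Phi(X)=X+\lambda(1-X^2)$ for $X\le X_0=\frac{n\mathbf{r}+|a_0|}{1+n\mathbf{r}|a_0|}$ (valid because $2n\mathbf{r}/(1-(n\mathbf{r})^2)\le1$ in the relevant range) to replace $|f(z)|$ by its Schwarz--Pick majorant simultaneously in both occurrences, and handles all remaining sums by Lemma \ref{Lem5} with $N=2$. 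A secondary looseness in your sketch: $|a_0|+(1-|a_0|^2)\rho/(1-\rho)$ is the coefficient-sum estimate for $|f(z)|$, not the Schwarz--Pick bound $\frac{|a_0|+\rho}{1+|a_0|\rho}$ from Lemma \ref{Lem1} that the proof actually requires. Your slicing/Parseval derivation of the coefficient system and the sharpness construction are reasonable in spirit (the paper instead uses $f(z)=\frac{a+(z_1+\cdots+z_n)}{1+a(z_1+\cdots+z_n)}$ evaluated at $z=(r,\dots,r)$), but the derivative step must be replaced by the Schwarz--Pick derivative estimate before the rest of the argument matters.
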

Finally, we state the multidimensional generalization of Theorem F.
\begin{theo}\label{Th-2.4} Let $f(z)=\sum_{|\alpha|=0} a_{\alpha}z^{\alpha}$ be a holomorphic function in the polydisk $\mathbb{P}\Delta(0;1_n)$ such that $|f(z)|\leq 1$ for all $z\in \mathbb{P}\Delta(0;1/n)$. Suppose $z=(z_1,\ldots,z_n)\in \mathbb{P}\Delta(0;1/n)$ and $r=(r_1,r_2,\ldots,r_n)$ such that $||z||_{\infty}={\bf r}$. Let $N\geq 2$. Then
\begin{align*}
\label{2.3} \mathcal{M}(z, {\bf r}):=|f(z)| +\sum_{k=N}^{\infty} \ \sum_{|\alpha|=k}\frac{1}{\alpha!}\left|\frac{\partial^{|\alpha|} f(z)}{\partial z_1^{\alpha_1}\cdots \partial z_n^{\alpha_n}}\right| |z|^{\alpha}\le 1
\end{align*}
for $n{\bf r} \le \tilde R_{n,N}$, where $\tilde R_{n,N}$ is the minimum positive root of the equation
\begin{align*}
(1+n{\bf r})(1-2n{\bf r})(1-n{\bf r})^{N-1} - 2(n{\bf r})^N = 0.
\end{align*}

The radius $\tilde R_{n,N}$ is the best possible. Moreover 
\begin{align*}\mathcal{N}(z, {\bf r}):=|f(z)|^2 +\sum_{k=N}^{\infty} \ \sum_{|\alpha|=k}\frac{1}{\alpha!}\left|\frac{\partial^{|\alpha|} f(z)}{\partial z_1^{\alpha_1}\cdots \partial z_n^{\alpha_n}}\right| |z|^{\alpha}\le 1
\end{align*}
for $n{\bf r} \le \tilde R_{n,N}'$, where $\tilde R_{n,N}'$ is the minimum positive root of the equation
\begin{align*}
(1+n{\bf r})(1-2n{\bf r})(1-n{\bf r})^{N-1} - (n{\bf r})^N = 0.
\end{align*}
\end{theo}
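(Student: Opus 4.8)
The plan is to reduce the polydisk statement to the one–variable Theorem E through the dilation $g(w):=f(w/n)$. If $w\in\mathbb P\Delta(0;1_n)$ then $w/n\in\mathbb P\Delta(0;1/n)$, so $g$ is holomorphic on the unit polydisk with $|g|\le 1$ there and has coefficients $b_\alpha=a_\alpha n^{-|\alpha|}$. Put $W:=nz$ and $\rho:=n\mathbf r=\|W\|_\infty$. Since $\partial^\alpha f(z)=n^{|\alpha|}(\partial^\alpha g)(W)$ and $|z|^\alpha=n^{-|\alpha|}|W|^\alpha$, each summand is scale invariant, $\tfrac1{\alpha!}|\partial^\alpha f(z)|\,|z|^\alpha=\tfrac1{\alpha!}|(\partial^\alpha g)(W)|\,|W|^\alpha$, and $|f(z)|=|g(W)|$; thus
\[
\mathcal M(z,\mathbf r)=|g(W)|+\sum_{k=N}^{\infty}\sum_{|\alpha|=k}\frac1{\alpha!}\,|(\partial^\alpha g)(W)|\,|W|^\alpha .
\]
Moreover the defining equation of $\tilde R_{n,N}$ is exactly the Theorem E equation $(1+\rho)(1-2\rho)(1-\rho)^{N-1}-2\rho^{N}=0$ in the single variable $\rho$, so it is enough to prove the inequality for $g$ on $\mathbb D^{n}$ when $\rho\le R_N$.

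The crux is a several–variable analogue of the Ruscheweyh derivative estimate, which I would isolate as the main lemma of Section \ref{Sec-3}: for every $k\ge 1$,
\[
\sum_{|\alpha|=k}\frac1{\alpha!}\,|(\partial^\alpha g)(W)|\,|W|^\alpha\;\le\;\frac{\bigl(1-|g(W)|^2\bigr)\,\rho^{k}}{(1-\rho^{2})(1-\rho)^{k-1}} .
\]
My approach to this is to recenter $g$ at $W$ by the polydisk automorphism $\Phi_W(u)=(\varphi_{W_1}(u_1),\dots,\varphi_{W_n}(u_n))$, $\varphi_{W_j}(u)=(u+W_j)/(1+\overline{W_j}u)$, which maps $\mathbb D^{n}$ onto itself with $\Phi_W(0)=W$, so $G:=g\circ\Phi_W$ satisfies $|G|\le1$ and $G(0)=g(W)$. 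Applying the several–variable coefficient estimates of Section \ref{Sec-3} to $G$ and transporting the Taylor coefficients $\tfrac1{\alpha!}\partial^\alpha g(W)$ back through $\Phi_W$ (whose Jacobian at $0$ is diagonal with entries $1-|W_j|^2$) introduces the weights $(1-|W_j|^2)$; the elementary bound $\prod_j(|W_j|/(1-|W_j|))^{\alpha_j}\le(\rho/(1-\rho))^{k}$, valid since $|W_j|\le\rho$ and $t\mapsto t/(1-t)$ increases, then yields the displayed estimate. I expect the genuine difficulty to lie in controlling the degree–lowering terms generated by the composition $g\circ\Phi_W$; it is precisely their correct handling that sharpens the denominator from $(1-\rho)^{k}$ to $(1-\rho^{2})(1-\rho)^{k-1}$ and hence upgrades a naive radius to the optimal one.

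Granting the key estimate, summation of the geometric series gives
\[
\sum_{k=N}^{\infty}\frac{\rho^{k}}{(1-\rho^{2})(1-\rho)^{k-1}}=\frac1{1+\rho}\cdot\frac{\rho^{N}}{(1-\rho)^{N-1}(1-2\rho)}=:B ,
\]
so that with $x:=|g(W)|\in[0,1)$ one has $\mathcal M(z,\mathbf r)\le x+(1-x^{2})B$. The map $x\mapsto x+(1-x^{2})B$ is concave, takes the value $1$ at $x=1$, and is nondecreasing on $[0,1]$ iff $2B\le 1$; consequently $\mathcal M(z,\mathbf r)\le1$ for all admissible $g$ exactly when $2B\le1$, i.e. $(1+\rho)(1-2\rho)(1-\rho)^{N-1}-2\rho^{N}\ge0$, which is $n\mathbf r\le\tilde R_{n,N}$. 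For $\mathcal N$ I replace the leading term by $x^{2}$ and use that $x^{2}+(1-x^{2})B\le1\Leftrightarrow(1-x^{2})(B-1)\le0$, valid for every $x\in[0,1]$ precisely when $B\le1$, i.e. $(1+\rho)(1-2\rho)(1-\rho)^{N-1}-\rho^{N}\ge0$; this yields $\tilde R_{n,N}'$.

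Sharpness is then read off the diagonal extremal $f(z)=\phi^{*}(z_1+\cdots+z_n)$, where $\phi^{*}\in\mathcal B$ is the extremal function of Theorem E with $\phi^{*}(0)=a_0$. At $z=(\mathbf r,\dots,\mathbf r)$ one has $|z|^\alpha=\mathbf r^{|\alpha|}$, $\sum_{|\alpha|=k}1/\alpha!=n^{k}/k!$ and $\partial^\alpha f(z)=(\phi^{*})^{(|\alpha|)}(n\mathbf r)$, so every estimate above collapses to the corresponding one–variable identity for $\phi^{*}$ at $\rho=n\mathbf r$. Since $\phi^{*}$ already produces equality at $\rho=R_N$ in Theorem E, neither $\tilde R_{n,N}$ nor $\tilde R_{n,N}'$ can be enlarged.
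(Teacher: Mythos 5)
Your overall architecture coincides with the paper's: the same per-degree estimate summed to $B=\frac{\rho^{N}}{(1+\rho)(1-2\rho)(1-\rho)^{N-1}}$, the same reduction to the scalar inequalities $x+(1-x^{2})B\le 1$ and $x^{2}+(1-x^{2})B\le 1$, and the same diagonal M\"obius-type extremal. Your handling of the scalar step is in fact cleaner than the paper's: you observe that $\Phi(x)=x+(1-x^{2})B$ satisfies $\Phi(1)=1$ and is nondecreasing on $[0,1]$ precisely when $2B\le 1$ (resp.\ $(1-x^{2})(B-1)\le 0$ when $B\le 1$), whereas the paper first substitutes the Schwarz--Pick bound $x\le\frac{|a_0|+n\mathbf{r}}{1+|a_0|n\mathbf{r}}$ and then verifies the sign of a polynomial $\nu_{n,N}$ through a two-case analysis.

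The genuine gap is your key lemma, which is the heart of the matter and which you leave unproved by your own admission. The paper does not establish it by recentering with automorphisms; it imports the termwise high-order Schwarz--Pick inequality on the polydisk (Lemma \ref{Lem2}, quoted from Liu--Chen), namely $\frac{1}{\alpha!}\left|\partial^{\alpha}f(z)\right|\le\frac{(1-|f(z)|^{2})(1+\|z\|_{\infty})^{|\alpha|-1}}{(1-\|z\|_{\infty}^{2})^{|\alpha|}}$, applies it at the \emph{original} point $z$ with $\|z\|_{\infty}=\mathbf{r}<1/n$, and then sums using $\sum_{|\alpha|=k}|z|^{\alpha}\le(|z_{1}|+\cdots+|z_{n}|)^{k}\le(n\mathbf{r})^{k}$ together with the monotonicity of $x\mapsto(1+x)(1-x)^{k}$ to pass from $\mathbf{r}$ to $n\mathbf{r}$. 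Your rescaled formulation asks for the per-degree bound at an arbitrary point $W$ of the unit polydisk with only $\rho^{k}$ on the right; summing the termwise estimate there yields $\sum_{|\alpha|=k}|W|^{\alpha}$, which can be as large as $\binom{k+n-1}{n-1}\rho^{k}$, so your claimed inequality does not follow from the obvious route, and the step you flag as the ``genuine difficulty'' (controlling the degree-lowering terms of $g\circ\Phi_{W}$) is exactly where the argument is missing. The repair is simply not to rescale before summing: keep $z$ in $\mathbb{P}\Delta(0;1/n)$, where the factor $\rho^{k}=(n\mathbf{r})^{k}$ absorbs the multi-index count for free. A minor further point: the extremal $\phi^{*}$ does not ``produce equality at $\rho=R_{N}$''; as in the paper, one shows the sum exceeds $1$ for every $\rho>\tilde R_{n,N}$ by letting $a\to1^{-}$ in the family $\frac{a-w}{1-aw}$.
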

\section{{\bf Key lemmas and their Proofs}}\label{Sec-3}
In order to establish our main results, we need the following lemmas. The first of these is a special case of \cite[Theorem 2.2 ]{Chen-Hamada-Ponnusamy-Vijayakumar-JAM-2024}.
\begin{lem}\label{Lem1}Let $f$ be holomorphic in the polydisk $\mathbb{P}\Delta(0;1_n)$ such that $|f(z)|\le 1$ for all $z\in \mathbb{P}\Delta(0;1_n)$. Then for all $z\in \mathbb{P}\Delta(0;1_n)$, we have
\[|f(z)|\leq \frac{|f(0)|+||z||_{\infty}}{1+|f(0)|||z||_{\infty}}.\]
\end{lem}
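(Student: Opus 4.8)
The plan is to reduce this several-variable estimate to the classical one-variable Schwarz--Pick inequality by slicing the polydisk along a single analytic disk joining the origin to the prescribed point. The key observation is that, although $\mathbb{P}\Delta(0;1_n)$ is not rotation-invariant, any point $z$ can still be reached from $0$ by a holomorphic disk that remains inside the polydisk, and along which $f$ restricts to a bounded self-map of $\mathbb{D}$.

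First I would dispose of the trivial case $z=0$, in which both sides reduce to $|f(0)|$. For $z\neq 0$, set $\rho=||z||_{\infty}=\max_{1\le i\le n}|z_i|$; since $z\in\mathbb{P}\Delta(0;1_n)$ we have $0<\rho<1$. I would then introduce the analytic disk $\phi\colon\mathbb{D}\to\mathbb{C}^n$ defined by
\[
\phi(\lambda)=\Bigl(\tfrac{z_1}{\rho}\lambda,\ldots,\tfrac{z_n}{\rho}\lambda\Bigr),
\]
and verify that it maps $\mathbb{D}$ into the polydisk: for each coordinate, $\bigl|\tfrac{z_i}{\rho}\lambda\bigr|=\tfrac{|z_i|}{\rho}\,|\lambda|\le|\lambda|<1$, because $|z_i|\le\rho$ for every $i$. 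Moreover $\phi(0)=0$ and, since $\rho\in\mathbb{D}$, $\phi(\rho)=(z_1,\ldots,z_n)=z$.

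Next I would consider the composition $g:=f\circ\phi\colon\mathbb{D}\to\overline{\mathbb{D}}$, which is holomorphic and satisfies $|g(\lambda)|\le 1$ on $\mathbb{D}$, together with $g(0)=f(0)$ and $g(\rho)=f(z)$. Applying the classical Schwarz--Pick lemma to $g$ yields
\[
\Bigl|\frac{g(\lambda)-g(0)}{1-\overline{g(0)}\,g(\lambda)}\Bigr|\le|\lambda|,\qquad \lambda\in\mathbb{D}.
\]
Evaluating at $\lambda=\rho$ and maximizing $|g(\rho)|$ over the pseudohyperbolic disk cut out by this constraint is a standard one-variable computation, giving $|g(\rho)|\le\frac{|g(0)|+\rho}{1+|g(0)|\,\rho}$. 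Substituting $g(0)=f(0)$, $g(\rho)=f(z)$ and $\rho=||z||_{\infty}$ produces exactly the asserted inequality.

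The argument is essentially routine once the disk $\phi$ is in place; the only point demanding a little care---the main, though mild, obstacle---is choosing the normalization of $\phi$ so that it simultaneously stays inside the polydisk for all $|\lambda|<1$ and passes through $z$ at the real parameter value $\lambda=\rho$. This is precisely why $||z||_{\infty}$ is the correct quantity on the right-hand side: it is the smallest scaling factor keeping every coordinate of $\phi(\lambda)$ in $\mathbb{D}$ while still allowing $\phi$ to reach $z$. Alternatively, one may simply invoke \cite[Theorem 2.2]{Chen-Hamada-Ponnusamy-Vijayakumar-JAM-2024}, of which this statement is the indicated special case.
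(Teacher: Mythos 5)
Your slicing argument is correct, and it is worth noting that the paper itself does not actually prove this lemma: it simply records it as a special case of \cite[Theorem 2.2]{Chen-Hamada-Ponnusamy-Vijayakumar-JAM-2024}, which is the Banach-space Schwarz--Pick lemma you allude to at the end. So your proposal supplies a genuine, self-contained proof where the paper offers only a citation. The disk $\phi(\lambda)=(z_1\lambda/\rho,\ldots,z_n\lambda/\rho)$ with $\rho=||z||_{\infty}$ does exactly what is needed: it stays in $\mathbb{P}\Delta(0;1_n)$, hits $0$ and $z$ at parameters $0$ and $\rho$, and reduces everything to the one-variable estimate $|g(\rho)|\le\frac{|g(0)|+\rho}{1+|g(0)|\rho}$, which is the standard consequence of Schwarz--Pick. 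The only point you should make explicit is the boundary case of the hypothesis $|f|\le 1$ (rather than $|f|<1$): by the maximum principle, either $|g|<1$ throughout $\mathbb{D}$, in which case Schwarz--Pick applies verbatim, or $g$ is a unimodular constant, in which case the claimed inequality is immediate since $c\le\frac{c+\rho}{1+c\rho}$ for $0\le c\le 1$. With that one-line remark added, the argument is complete, and it has the advantage over the paper's treatment of being elementary and not requiring the machinery of the cited reference.
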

The following lemma is contained in \cite[Corollary 1.3]{Liu-Chen-IJPAM-2012}.
\begin{lem}\label{Lem2} Let $f$ be holomorphic in the polydisk $\mathbb{P}\Delta(0;1_n)$ such that $|f(z)|< 1$ for all $z\in \mathbb{P}\Delta(0;1_n)$. Then for multi-index $\alpha=(\alpha_1,\ldots,\alpha_n)$, we have
\begin{align*}
\left|\frac{\partial^{|\alpha|} f(z)}{\partial z_1^{\alpha_1}\ldots \partial z_n^{\alpha_n}}\right|\leq \alpha!\frac{1-|f(z)|^2}{(1-||z||_{\infty}^2)^{|\alpha|}}(1+||z||_{\infty})^{|\alpha|-N}
\end{align*}
for all $z\in \mathbb{P}\Delta(0;1_n)$, where $N$ is the number of the indices $j$ such that $\alpha_j\neq 0$.
\end{lem}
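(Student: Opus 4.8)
The plan is to deduce the estimate from the classical one–variable Schwarz–Pick inequality for higher derivatives (the case $n=1$, where for $g:\mathbb{D}\to\mathbb{D}$ one has $|g^{(k)}(a)|\le k!\,(1-|g(a)|^2)(1+|a|)^{k-1}(1-|a|^2)^{-k}$) combined with the automorphism group of the polydisk, the point being that both ingredients respect the product structure of $\mathbb{P}\Delta(0;1_n)$.

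First I would reduce to the case in which every $\alpha_j\ge 1$, so that $N=n$. After relabelling the coordinates, suppose $\alpha_j=0$ for $j>N$; then $z_{N+1},\dots,z_n$ are not differentiated, and freezing them turns $f$ into a holomorphic $\tilde f$ on $\mathbb{P}\Delta(0;1_N)$ with $|\tilde f|<1$ and $\partial^\alpha f(z)=\partial^{\tilde\alpha}\tilde f(\tilde z)$, where $\tilde\alpha=(\alpha_1,\dots,\alpha_N)$ has all entries positive. Since $||\tilde z||_{\infty}\le ||z||_{\infty}$ and $|\alpha|-N\ge 0$, enlarging the norm only increases $(1-||\tilde z||_{\infty}^2)^{-|\alpha|}$ and $(1+||\tilde z||_{\infty})^{|\alpha|-N}$, so the $N$–variable estimate for $\tilde f$ implies the bound claimed for $f$. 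Hence I assume $N=n$ and $\alpha_j\ge 1$ for all $j$.

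Next I would normalise using automorphisms. Put $b=f(z)$, let $\tau_b(s)=\frac{s-b}{1-\overline{b}\,s}$ be the disk automorphism killing $b$, and let $\sigma=(\sigma_1,\dots,\sigma_n)$ with $\sigma_j(w_j)=\frac{w_j+z_j}{1+\overline{z_j}\,w_j}$ be the product automorphism of $\mathbb{P}\Delta(0;1_n)$ with $\sigma(0)=z$. Then $G:=\tau_b\circ f\circ\sigma$ is holomorphic with $|G|<1$ and $G(0)=0$; write $G(w)=\sum_{|\beta|\ge 1}c_\beta w^\beta$. From $f=\tau_b^{-1}\circ G\circ\sigma^{-1}$, with $\tau_b^{-1}(s)=\frac{s+b}{1+\overline{b}\,s}$ and $\sigma^{-1}$ acting coordinatewise, I would compute $\partial^\alpha f(z)$ by the Fa\`a di Bruno formula in each coordinate (through $\sigma^{-1}$) and once more through $\tau_b^{-1}$ at $G(0)=0$. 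The two factors that come out in front are $(\tau_b^{-1})'(0)=1-|b|^2=1-|f(z)|^2$ and $(\sigma_j^{-1})'(z_j)=(1-|z_j|^2)^{-1}$, so the leading contribution is $\alpha!\,(1-|f(z)|^2)\,c_\alpha\prod_j(1-|z_j|^2)^{-\alpha_j}$, while all remaining terms carry the same weight $\prod_j(1-|z_j|^2)^{-\alpha_j}$ and the same $1-|f(z)|^2$ but replace $c_\alpha$ by products of powers of $\overline{b}$ with lower coefficients $c_\beta$, $\beta\le\alpha$.

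The main obstacle is precisely the control of these correction terms. Crude bounds such as $|c_\beta|\le 1$ are too weak — already for $\alpha=(1,1)$ the relevant combination is $c_{(1,1)}-2\overline{b}\,c_{(1,0)}c_{(0,1)}$, which $|c_\beta|\le 1$ would bound by a constant as large as $3$ rather than by $1$ — so one must invoke the sharp Schur–class coefficient inequalities on the polydisk for functions vanishing at the origin, whose built–in cancellations let the whole bracket reorganise, coordinate by coordinate, into the weight $\prod_j(1+|z_j|)^{\alpha_j-1}$ familiar from the one–variable estimate. Granting this, the monotone substitution $|z_j|\le ||z||_{\infty}$ turns $\prod_j(1-|z_j|^2)^{-\alpha_j}$ into $(1-||z||_{\infty}^2)^{-|\alpha|}$ and collects $\prod_j(1+|z_j|)^{\alpha_j-1}$ into $(1+||z||_{\infty})^{|\alpha|-n}$, which is the asserted inequality with the single factor $1-|f(z)|^2$. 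As a consistency check, the weaker form with $1$ in place of $1-|f(z)|^2$ is obtainable much more cheaply by slicing: the one–variable estimate in $z_n$ shows that $(\alpha_n!)^{-1}(1-|z_n|^2)^{\alpha_n}(1+|z_n|)^{-(\alpha_n-1)}\partial_{z_n}^{\alpha_n}f$ has modulus at most $1$ as a function of the remaining variables, and an induction on the number of differentiated coordinates then reproduces exactly the product of weights above but with $1$ replacing $1-|f(z)|^2$; reinstating the sharp factor is what forces the automorphism computation.
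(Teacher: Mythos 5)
First, a point of reference: the paper does not prove this lemma at all --- it is quoted verbatim from Liu and Chen \cite[Corollary 1.3]{Liu-Chen-IJPAM-2012} (the high-order Schwarz--Pick lemma for the Schur class on the polydisc), so there is no in-paper proof for your argument to parallel. Your proposal must therefore stand on its own, and it does not: it has a genuine gap at exactly the decisive step. Your reduction to the case $N=n$ by freezing the undifferentiated variables is correct (monotonicity of the bound in $\|z\|_{\infty}$ is used properly), and your closing ``consistency check'' --- slicing coordinate by coordinate to get the estimate with $1$ in place of $1-|f(z)|^2$ --- is also sound. But the heart of the matter, reinstating the factor $1-|f(z)|^2$, is handled by the phrase ``one must invoke the sharp Schur--class coefficient inequalities on the polydisk \ldots Granting this,'' and no such inequalities are stated, referenced, or proved. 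You yourself exhibit why this is not a technicality: already for $\alpha=(1,1)$ the automorphism computation reduces the claim to
\begin{align*}
\bigl|c_{(1,1)}-2\,\overline{b}\,c_{(1,0)}c_{(0,1)}\bigr|\le 1,
\end{align*}
uniformly in $b=f(z)\in\mathbb{D}$, where the $c_\beta$ are the Taylor coefficients of $G=\tau_b\circ f\circ\sigma$. Specializing to $z=0$ (where $\sigma=\mathrm{id}$), this inequality \emph{is} the case $\alpha=(1,1)$ of the lemma restated in coefficient form; the same is true for general $\alpha$, since the bracket you obtain from Fa\`a di Bruno at the origin is exactly $\partial^{\alpha}f(0)/(1-|f(0)|^2)$ up to the factor $\alpha!$. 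So ``granting'' the reorganization of the correction terms into the weight $\prod_j(1+|z_j|)^{\alpha_j-1}$ amounts to assuming the conclusion, and the argument is circular rather than merely incomplete.

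To repair it you would need an independent proof of those coefficient cancellations, which is precisely the nontrivial content of the cited work: the known proofs proceed via the one-variable high-order Schwarz--Pick lemma $|g^{(k)}(a)|\le k!\,(1-|g(a)|^2)(1+|a|)^{k-1}(1-|a|^2)^{-k}$ applied through a genuine induction on the number of differentiated variables in which the factor $1-|\cdot|^2$ is tracked for the original $f$ throughout (this is where your slicing argument loses it, since after one differentiation the sliced function is no longer $f$), not via a one-shot Fa\`a di Bruno expansion at a normalized point. As written, your proposal is a plausible strategy sketch with correct peripheral reductions, but the lemma's actual difficulty is deferred to an unproved black box; the honest fix within this paper's economy is simply to do what the authors do and cite \cite[Corollary 1.3]{Liu-Chen-IJPAM-2012}, or else to reproduce that induction in full.
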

\begin{lem}\label{Lem3}\cite{Carlson-AMAF-1940} Suppose that $f\in\mathbb{B}$ and $f(z)=\sum_{k=0}^{\infty} a_k z^k$. Then the following inequalities hold.
\begin{enumerate} 
\item[\emph{(a)}] $|a_{2k+1}|\leq 1-|a_0|^2-|a_1|^2-\ldots-|a_k|^2$,\;\;$k=0,1,\ldots$;
\item[\emph{(b)}] $|a_{2k}|\leq 1-|a_0|^2-|a_1|^2-\ldots-|a_{k-1}|^2-\frac{|a_k|^2}{1+|a_0|}$, \;\;$k=1,2,\ldots$. 
\end{enumerate}
Further, to have equality in $(a)$ it is necessary that $f(z)$ is  a rational function of the form
\begin{align*}
	f(z)=\frac{a_0+a_1z+\ldots+a_kz^k+\epsilon z^{2k+1}}{1+\left(\ol a_k z^k+\ldots+\ol a_0 z^{2k+1}\right)\epsilon},\;\;|\epsilon|=1,
\end{align*}
and to have equality in $(b)$ it is necessary that $f(z)$ is a rational function of the form
\[f(z)=\frac{a_0+a_1z+\ldots+\frac{a_k}{1+|a_0|}z^k+\epsilon z^{2k}}{1+\left(\frac{\ol a_k}{1+|a_0|} z^k+\ldots+\ol a_0 z^{2k}\right)\epsilon},\;\;|\epsilon|=1,\]
where $a_0\ol a_k^2\epsilon$ is non-positive real.
\end{lem}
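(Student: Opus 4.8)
Since this is Carlson's classical coefficient theorem, the plan is to reduce it to the Schwarz--Pick inequality via a single application of the Schur transform and then iterate. Because $|f(z)|<1$ on $\mathbb{D}$ we have $|a_0|<1$, so the Möbius map below is a genuine automorphism of the disk, and I would encode $f\in\mathbb{B}$ through the identity
\begin{align*}
f(z)=\frac{a_0+zg(z)}{1+\overline{a_0}\,zg(z)},
\end{align*}
where $g$ is again a Schur function (holomorphic with $|g|\le 1$ on $\mathbb{D}$), obtained by inverting the relation $zg=(f-a_0)/(1-\overline{a_0}f)$. Writing $g(z)=\sum_{j\ge0}b_jz^j$ and comparing Taylor coefficients produces the recursion
\begin{align*}
a_n=(1-|a_0|^2)\,b_{n-1}-\overline{a_0}\sum_{m=1}^{n-1}a_m\,b_{n-1-m},\qquad n\ge1,
\end{align*}
together with the base relation $b_0=a_1/(1-|a_0|^2)$. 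The case $n=1$ already gives $|a_1|\le 1-|a_0|^2$ from $|b_0|\le1$, which is exactly part (a) with $k=0$, i.e. the Schwarz--Pick bound.

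The core of the argument is a coupled induction on the coefficient index $N$, carried out uniformly over the whole class $\mathbb{B}$, and exploiting that passing from $f$ to $g$ shifts indices down by one and hence exchanges the two parities. Concretely, I would deduce inequality (a) at index $N=2k+1$ from the inductive hypothesis (b) for $g$ at the even index $2k$ (which controls $|b_{2k}|$), and inequality (b) at index $N=2k$ from hypothesis (a) for $g$ at the odd index $2k-1$. In each step the leading term $(1-|a_0|^2)b_{n-1}$ is estimated by the inductive bound for $g$, while the convolution tail $\overline{a_0}\sum_{m}a_m b_{n-1-m}$ is controlled by the triangle inequality after re-expressing the lower $a_m$ through the same recursion. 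As a template, the case $n=2$ gives $a_2=(1-|a_0|^2)(b_1-\overline{a_0}b_0^2)$, and applying $|b_1|\le 1-|b_0|^2$ followed by the substitution $b_0=a_1/(1-|a_0|^2)$ collapses, after routine algebra, to precisely $|a_2|\le 1-|a_0|^2-|a_1|^2/(1+|a_0|)$, which is (b) with $k=1$; the general index proceeds by the same mechanism with more terms.

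For the extremal characterization I would track equality backwards through the induction. Equality in (a) or (b) forces equality in every Schwarz--Pick and triangle-inequality step invoked, which pins $g$ down to a finite Blaschke product of the minimal degree compatible with the prescribed coefficients and forces the phases of the summands to align; this alignment is exactly the source of the stated requirement that $a_0\,\overline{a_k}^{\,2}\epsilon$ be a non-positive real number. Substituting such a $g$ back into $f=(a_0+zg)/(1+\overline{a_0}zg)$ and clearing denominators reconstructs the displayed rational (Blaschke) functions, and a direct check that these attain equality completes the sharpness claim. I expect the main obstacle to be the bookkeeping for general $k$: verifying that the convolution cross-terms recombine to yield exactly $\sum_{j=0}^{k}|a_j|^2$ in (a), and the corrected sum carrying the $|a_k|^2/(1+|a_0|)$ term in (b), and then propagating the equality conditions cleanly enough to recover the precise phase constraint in the extremal functions.
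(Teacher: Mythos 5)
Note first that the paper contains no proof of this lemma: it is imported verbatim from Carlson \cite{Carlson-AMAF-1940} and used as a black box, so your proposal can only be assessed on its own merits. Your setup is fine: the Schur transform $zg=(f-a_0)/(1-\overline{a_0}f)$, the recursion $a_n=(1-|a_0|^2)b_{n-1}-\overline{a_0}\sum_{m=1}^{n-1}a_m b_{n-1-m}$, the base case $|a_1|\le 1-|a_0|^2$, and the $n=2$ computation yielding (b) with $k=1$ are all correct.

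The genuine gap is in the inductive mechanism itself: estimating $(1-|a_0|^2)b_{n-1}$ by the inductive coefficient bound for $g$ and the convolution tail by the triangle inequality fails already at $n=3$, i.e.\ at part (a) with $k=1$. There $a_3=(1-|a_0|^2)b_2-\overline{a_0}(a_1b_1+a_2b_0)$; inserting $|b_2|\le 1-|b_0|^2-\tfrac{|b_1|^2}{1+|b_0|}$ (your parity-exchanged hypothesis for $g$), $|a_1|=(1-|a_0|^2)|b_0|$, and $|a_2|\le(1-|a_0|^2)(|b_1|+|a_0||b_0|^2)$, the target $|a_3|\le 1-|a_0|^2-|a_1|^2$ reduces, after dividing by $1-|a_0|^2$ and writing $\beta=|b_0|$, $t=|b_1|$, to
\begin{align*}
2|a_0|\beta t-\frac{t^{2}}{1+\beta}\;\le\;|a_0|^{2}\beta^{2}(1-\beta),\qquad 0\le t\le 1-\beta^{2},
\end{align*}
which is false: for $|a_0|=\beta=0.9$ and $t=0.19$ the left side is $\approx 0.289$ while the right side is $\approx 0.066$, and such $(\beta,t)$ are attainable by genuine Schur functions. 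The lemma is of course true, but your chain of inequalities does not close, so the argument is invalid: the success at $n=2$ is accidental (the tail there degenerates to the single term $\overline{a_0}b_0^2$, which is absorbed exactly), whereas for $n\ge 3$ the cross terms $\overline{a_0}a_mb_{n-1-m}$ cannot be given worst-case phases independently of the extremal configuration for $b_{n-1}$ --- any correct proof must exploit this phase rigidity, which is why Carlson-type arguments work with positivity of the associated Toeplitz/Schur quadratic forms (equivalently, tracking the full Schur parameter sequence) rather than coefficientwise triangle estimates. Since your extremal characterization is propagated backwards through this same broken induction, the sharpness claim inherits the gap as well.
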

To establish our results, we first prove the following result, which is a multidimensional generalization of Lemma \ref{Lem3}.
\begin{lem}\label{Lem4} Let $f$ be a holomorphic function in the polydisk $\mathbb{P}\Delta(0;1_n)$ such that $|f(z)|\leq 1$ and
$f(z)=\sum_{|\alpha|=0}^{\infty}a_{\alpha}z^{\alpha}$ for all $z\in \mathbb{P}\Delta(0;1_n)$. Then the following inequalities hold.
\begin{enumerate} 
\item[\emph{(i)}] For $k=0,1,2,\ldots$, we have 
\begin{align*}
\sum_{|\alpha|=2k+1} |a_{\alpha}|\leq n^{(2k+1)/2}\left(1-\sum_{i=0}^k\sum_{|\alpha|=i}|a_{\alpha}|^2\right).
\end{align*}
\item[\emph{(ii)}] For $k=1,2,\ldots$, we have
\begin{align*}
 \sum_{|\alpha|=2k} |a_{\alpha}|\leq n^{k}\left(1-\sum_{i=0}^{k-1}\sum_{|\alpha|=i}|a_{\alpha}|^2-\frac{\sum_{|\alpha|=k}|a_{\alpha}|^2}{1+|a_0|}\right).
\end{align*}
\end{enumerate}
\end{lem}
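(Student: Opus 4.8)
The plan is to reduce everything to the one--variable Carlson inequalities of Lemma \ref{Lem3} by slicing the polydisk along complex lines through the origin, and then to recover the multi--index coefficient sums through an orthogonality computation on the distinguished boundary together with a Cauchy--Schwarz estimate that generates the factor $n^{|\alpha|/2}$.

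First I would fix a point $u=(u_1,\dots,u_n)$ on the torus $\mathbb{T}^n=\{u:|u_j|=1\}$ and set $g_u(\zeta):=f(\zeta u)$. For $\zeta\in\mathbb{D}$ one has $\|\zeta u\|_\infty=|\zeta|<1$, so $\zeta u\in\mathbb{P}\Delta(0;1_n)$ and hence $|g_u(\zeta)|\le 1$; thus $g_u\in\mathbb{B}$ is a one--variable function whose Taylor expansion is $g_u(\zeta)=\sum_{j\ge 0}P_j(u)\zeta^j$, where $P_j(u)=\sum_{|\alpha|=j}a_\alpha u^\alpha$ is the $j$-th homogeneous part evaluated at $u$. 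Note that $P_0(u)\equiv a_0$, so the constant term of every slice equals $a_0$. Applying Lemma \ref{Lem3}(a) and (b) to $g_u$ yields, for every $u\in\mathbb{T}^n$, the pointwise inequalities
\begin{align*}
|P_{2k+1}(u)|\le 1-\sum_{j=0}^{k}|P_j(u)|^2,\qquad |P_{2k}(u)|\le 1-\sum_{j=0}^{k-1}|P_j(u)|^2-\frac{|P_k(u)|^2}{1+|a_0|}.
\end{align*}

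Next I would convert these into bounds on the coefficient sums. Writing $S_j:=\sum_{|\alpha|=j}|a_\alpha|^2$ and using that the monomials $\{u^\alpha\}$ are orthonormal for the normalized Haar measure $d\mu$ on $\mathbb{T}^n$, one gets $\int_{\mathbb{T}^n}|P_j(u)|^2\,d\mu(u)=S_j$. Since the number of multi-indices of degree $m$ in $n$ variables is $\binom{m+n-1}{n-1}\le n^m$, Cauchy--Schwarz gives $\sum_{|\alpha|=m}|a_\alpha|\le n^{m/2}\,S_m^{1/2}$. Hence (i) and (ii) will follow once I establish the energy forms of Carlson's inequalities, namely
\begin{align*}
S_{2k+1}^{1/2}\le 1-\sum_{j=0}^{k}S_j,\qquad S_{2k}^{1/2}\le 1-\sum_{j=0}^{k-1}S_j-\frac{S_k}{1+|a_0|}.
\end{align*}

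The main obstacle is precisely this last step. Integrating the pointwise bounds over $\mathbb{T}^n$ is easy and, using $|P_m(u)|\le 1$, gives $S_m=\int_{\mathbb{T}^n}|P_m|^2\,d\mu\le\int_{\mathbb{T}^n}|P_m|\,d\mu\le(\text{the corresponding Carlson right--hand side})$; but this only produces $S_m\le(\text{RHS})$, hence $S_m^{1/2}\le(\text{RHS})^{1/2}$, which is weaker than required. Recovering the full, non--square--rooted right--hand side is where the pointwise constraint $|f|\le 1$ must be used beyond mere $L^2$ averaging. For $m=1$ this is transparent: choosing $u_j$ to align the phases of the $a_{e_j}$ gives $\sum_{|\alpha|=1}|a_\alpha|\le 1-|a_0|^2$ directly, whence $S_1^{1/2}\le 1-S_0$. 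For general $m$ the phases of the distinct monomials of a fixed degree cannot be aligned simultaneously by a single $u$, so I would instead aim to manufacture one one--variable Schur function whose $j$-th Taylor coefficient has modulus exactly $S_j^{1/2}$ (an ``energy majorant'' of $f$) and apply Lemma \ref{Lem3} to it; verifying that this majorant is genuinely bounded by $1$, equivalently that the energy sequence of a polydisk Schur function is realized by an honest one--variable Schur function, is the crux and the step I expect to demand the most care.
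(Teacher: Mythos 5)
Your setup coincides with the paper's own: slice along complex lines through the origin so that each slice $g_u(\zeta)=f(\zeta u)$ is a one-variable Schur function with homogeneous parts $P_j(u)$ as Taylor coefficients and with the \emph{same} constant term $a_0$, apply Carlson's inequalities (Lemma \ref{Lem3}) pointwise, and use orthogonality on the distinguished boundary together with the count $\binom{m+n-1}{n-1}\le n^m$ and Cauchy--Schwarz to generate the factor $n^{m/2}$. All of that is correct and is exactly how the paper begins. But the proposal does not contain a proof: the step you yourself identify as the crux --- upgrading the averaged Carlson bounds to $S_{2k+1}^{1/2}\le 1-\sum_{j\le k}S_j$ and its even analogue --- is left open. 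Your diagnosis of why naive integration fails is accurate (it yields only $S_m\le \mathrm{RHS}$, hence the square-rooted and therefore weaker bound), but the ``energy majorant'' you propose as a remedy, namely a one-variable Schur function whose $j$-th coefficient has modulus $S_j^{1/2}$, is neither constructed nor shown to exist; nothing in the argument verifies that the sequence $\bigl(S_j^{1/2}\bigr)_j$ satisfies the Schur-class coefficient constraints. As written, the proposal establishes only the weaker inequality with $\bigl(1-\sum_{j\le k}S_j\bigr)^{1/2}$ on the right-hand side.

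The paper closes this gap without any majorant by replacing your integrals over $\mathbb{T}^n$ with suprema over the determining set $C^n(0,r)$. Writing $M_i=\max_{C^n(0,r)}|P_i|$, the same orthogonality identity gives $\sum_{|\alpha|=i}|a_\alpha|^2 r^{2\alpha}=\|P_i\|_{L^2}^2\le M_i^2$, while Cauchy--Schwarz gives $\sum_{|\alpha|=2k+1}|a_\alpha|r^\alpha\le n^{(2k+1)/2}M_{2k+1}$; these sup-norm bounds are then fed into the pointwise Carlson inequality $|a_0|^2+\sum_{i\le k}|P_i|^2+|P_{2k+1}|\le 1$ (see the paper's displays (3.3)--(3.7)), and letting $r_j\to 1$ yields (i); part (ii) is identical using Lemma \ref{Lem3}(b), the fact that every slice has constant term $a_0$ being what preserves the factor $1/(1+|a_0|)$. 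The essential point your route misses is that the odd-degree (respectively, top even-degree) term enters Carlson's inequality \emph{linearly}, so bounding it by a sup norm rather than an $L^2$ norm costs nothing and avoids the square root entirely. If you wish to salvage your approach, switch from $\int_{\mathbb{T}^n}$ to $\max_{C^n(0,r)}$ at this stage (taking care about how the suprema of the various $P_i$ are combined, since they need not be attained at the same boundary point) rather than pursuing the unproven majorant.
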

\begin{proof} Let $f(z)=a_0+P_{1}(z)+P_{2}(z)+\ldots$, where $P_k(z)=\displaystyle\sum_{|\alpha|=k}a_{\alpha}z^{\alpha}$
is a homogeneous polynomial of degree $k$. We define 
\begin{align}
	\label{mms1}g(t)=f\left(z t\right)=a_0+P_{1}\left(z\right)t+P_{2}\left(z\right)t^2+\ldots,
\end{align}
where $t\in\mathbb{C}$ such that $|t|\leq 1$.
For a fixed $z\in \mathbb{P}\Delta(0;1_n)$, we can say that $g(t)$ is analytic in $|t|\leq 1$. 
Since $|f(z)|\leq 1$, from \eqref{mms1}, we get $|g(t)|\leq 1$ in $|t|\leq 1$. Now by Lemma \ref{Lem5} $(a)$, we have
\begin{align*} 
|P_{2k+1}(z)|\leq 1-|a_0|^2-|P_1(z)|^2-\ldots-|P_k(z)|^2,
\end{align*}
\textit{i.e.,}
\begin{align}
\label{mms2} |a_0|^2+|P_1(z)|^2+\ldots+|P_k(z)|^2+|P_{2k+1}(z)|\leq 1,
\end{align}
where $k=0,1,2,\ldots$.
We set $z_j=r_je^{i\theta_j}\;(0\leq\theta_j\leq 2\pi)$, where $0<r_j<1$, $j=1,2,\ldots,n$ and consider the integration of $\left|P_{i}\left(r_1e^{i\theta_1},\ldots,r_ne^{i\theta_n}\right)\right|^2$. Noting that $\int_0^{2\pi} e^{i(\beta-\gamma)\theta}d\theta=2\pi \delta_{\beta,\gamma}$ (Kronecker's symbol), we have
\begin{align*} \sum_{|\alpha|=i} |a_{\alpha}|^2r^{2\alpha}&=\frac{1}{(2\pi)^n} \int_{0}^{2\pi} \cdots \int_{0}^{2\pi} \left|P_{i}\left(r_1e^{i\theta_1},\ldots,r_ne^{i\theta_n}\right)\right|^2 \, d\theta_1 \cdots d\theta_n\\&\leq
\max_{C^n(0,r)}\left|P_{i}\left(r_1e^{i\theta_1},\ldots,r_ne^{i\theta_n}\right)\right|^2,
\end{align*}
\textit{i.e.,}
\begin{align}
\label{mms3} \sum_{|\alpha|=i} |a_{\alpha}|^2r^{2\alpha}\leq \max_{C^n(0,r)}\left|P_{i}\left(r_1e^{i\theta_1},\ldots,r_ne^{i\theta_n}\right)\right|^2,
\end{align}
where $i\geq 1$ and $r=(r_1,r_2,\ldots,r_n)$. On the other hand, from (\ref{mms2}), we have 
\begin{align}
\label{mms4} &|a_0|^2+\max_{C^n(0,r)}\left|P_1\left(r_1e^{i\theta_1},\ldots,r_ne^{i\theta_n}\right)\right|^2+\ldots+\max_{C^n(0,r)}\left|P_k\left(r_1e^{i\theta_1},\ldots,r_ne^{i\theta_n}\right)\right|^2\\&+\max_{C^n(0,r)}\left|P_{2k+1}\left(r_1e^{i\theta_1},\ldots,r_ne^{i\theta_n}\right)\right|\leq 1,\nonumber
\end{align}
where $k=0,1,2,\ldots$\vspace{1.2mm}

 Using (\ref{mms3}) to (\ref{mms4}), we obtain
\begin{align}\label{mms5} |a_0|^2+\sum_{i=1}^k \sum_{|\alpha|=i} |a_{\alpha}|^2r^{2\alpha}+\max_{C^n(0,r)}\left|P_{2k+1}\left(r_1e^{i\theta_1},\ldots,r_ne^{i\theta_n}\right)\right|\leq 1,
\end{align}
where $k=0,1,2,\ldots$.\vspace{1.2mm}

 By Cauchy-Schwarz inequality, we have
\begin{align*}
\left(\sum_{|\alpha|=2k+1} |a_{\alpha}r^{\alpha}| \right)^2\leq \left(\sum_{|\alpha|=2k+1} |a_{\alpha}|^2\right)\times \left(\sum_{|\alpha|=2k+1} r^{2\alpha}\right)\leq n^{|\alpha|}\;\sum_{|\alpha|=2k+1} |a_{\alpha}|^2r^{2\alpha}
\end{align*}
and thus it follows from (\ref{mms3}) that
\begin{align*} 
\left(\sum_{|\alpha|=2k+1} |a_{\alpha}r^{\alpha}| \right)^2\leq n^{|\alpha|}\left(\max_{C^n(0,r)}\left|P_{2k+1}\left(r_1e^{i\theta_1},\ldots,r_ne^{i\theta_n}\right)\right|\right)^2,
\end{align*}
\textit{i.e.,}
\begin{align}
\label{mms6} \sum_{|\alpha|=2k+1} |a_{\alpha}r^{\alpha}|\leq n^{|\alpha|/2}\max_{C^n(0,r)}\left|P_{2k+1}\left(r_1e^{i\theta_1},\ldots,r_ne^{i\theta_n}\right)\right|.
\end{align}
Therefore, using (\ref{mms6}) to (\ref{mms5}), we have
\begin{align}
\label{mms7} n^{|\alpha|/2}|a_0|^2+n^{|\alpha|/2}\sum_{i=1}^k \sum_{|\alpha|=i} |a_{\alpha}|^2r^{2\alpha}+\sum_{|\alpha|=2k+1} |a_{\alpha}r^{\alpha}|\leq n^{|\alpha|/2},
\end{align}
where $k=0,1,2,\ldots$ Letting $r_i\to 1$ for $i=1,2,\ldots,n$, we get from (\ref{mms7}) that 
\begin{align*}
\sum_{|\alpha|=2k+1} |a_{\alpha}|\leq n^{(2k+1)/2}\left(1-\sum_{i=0}^k\sum_{|\alpha|=i}|a_{\alpha}|^2\right),
\end{align*}
where $k=0,1,2,\ldots$.\vspace{1.2mm}

By Lemma \ref{Lem5} $(b)$, we have
\begin{align*} 
|P_{2k}(z)|\leq 1-|a_0|^2-|P_1(z)|^2-\ldots-|P_{k-1}(z)|^2-\frac{|P_k(z)|^2}{1+|a_0|},
\end{align*}
\textit{i.e.,}
\begin{align*} 
|a_0|^2+|P_1(z)|^2+\ldots+|P_{k-1}(z)|^2+\frac{|P_k(z)|^2}{1+|a_0|}+|P_{2k}(z)|\leq 1,
\end{align*}
where $k=1,2,\ldots$.\vspace{1.2mm}

Based on the similar argument used before, we prove that
\begin{align*}
\sum_{|\alpha|=2k} |a_{\alpha}|\leq n^{k}\left(1-\sum_{i=0}^{k-1}\sum_{|\alpha|=i}|a_{\alpha}|^2-\frac{\sum_{|\alpha|=k}|a_{\alpha}|^2}{1+|a_0|}\right),
\end{align*}
where $k=1,2,\ldots$
\end{proof}
\begin{lem}\label{Lem5} Let $f$ be a holomorphic function in the polydisk $\mathbb{P}\Delta(0;1_n)$ such that $|f(z)|\leq 1$ and
$f(z)=\sum_{|\alpha|=0}^{\infty}a_{\alpha}z^{\alpha}$ for all $z\in \mathbb{P}\Delta(0;1_n)$. Then for any $N\in\mathbb{N}$, the following sharp inequality holds:
\begin{align}
\label{Lema5}\sum_{k=N}^{\infty}\sum\limits_{|\alpha|=k}|a_{\alpha}|\, r^{\alpha} &+ \operatorname{sgn}(t)\sum\limits_{k=1}^t\sum\limits_{|\alpha|=k}|a_{\alpha}|^2 \frac{{\bf r}^{N}}{1-{\bf r}}+\left( \frac{1}{1+|a_0|}+\frac{{\bf r}}{1-{\bf r}}\right)
\sum_{k=t+1}^{\infty} \sum\limits_{|\alpha|=k}|a_{\alpha}|^2 r^{2\alpha}\nonumber\\&\leq
\left(1-|a_0|^2\right)\frac{(n{\bf r})^N}{1-n{\bf r}}
\end{align}
for $n{\bf r}\in[0,1)$, where $r=(r_1,r_2,\ldots,r_n)$, ${\bf r}=||r||_{\infty}$ and $t=[(N-1)/2]$, $[x]$ denotes the largest integer no more than $x$, where $x$ is a real number.
\end{lem}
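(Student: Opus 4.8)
The plan is to reduce the multivariable estimate to a one--variable computation of exactly the type carried out in \cite{LLP} for Theorem B, with the single--disk radius $r$ systematically replaced by $n{\bf r}$. Throughout, abbreviate $T_k:=\sum_{|\alpha|=k}|a_\alpha|$ and $S_i:=\sum_{|\alpha|=i}|a_\alpha|^2$, so that $S_0=|a_0|^2$. The first move is to discard the directional information in $r=(r_1,\dots,r_n)$ via the elementary bounds $r^\alpha\le {\bf r}^{|\alpha|}$ and $r^{2\alpha}\le{\bf r}^{2|\alpha|}$, which turn every block $\sum_{|\alpha|=k}|a_\alpha|r^\alpha$ into at most ${\bf r}^kT_k$ and every quadratic block $\sum_{|\alpha|=k}|a_\alpha|^2r^{2\alpha}$ into at most ${\bf r}^{2k}S_k$. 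This collapses (\ref{Lema5}) to an inequality phrased purely in the scalar quantities $T_k$, $S_k$ and ${\bf r}$.

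Next I would invoke Lemma~\ref{Lem4}, the multidimensional Carlson estimate, to control each $T_k$. Writing $k=2j+1$ or $k=2j$, parts (i) and (ii) give $T_k\le n^{k/2}\bigl((1-|a_0|^2)-Q_k\bigr)$, where $Q_k\ge 0$ is the nonnegative combination of lower quadratic sums dictated by Carlson: $Q_{2j+1}=\sum_{i=1}^{j}S_i$ in odd degree, and $Q_{2j}=\sum_{i=1}^{j-1}S_i+\tfrac{1}{1+|a_0|}S_j$ in even degree. Since the bracket is nonnegative, I may crudely enlarge $n^{k/2}\le n^{k}$ and combine with ${\bf r}^k$ to obtain ${\bf r}^kT_k\le (n{\bf r})^k\bigl((1-|a_0|^2)-Q_k\bigr)$; this is precisely the step that manufactures the factor $n{\bf r}$ demanded by the statement. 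Summing the leading part over $k\ge N$ yields the geometric series $(1-|a_0|^2)\sum_{k\ge N}(n{\bf r})^k=(1-|a_0|^2)\frac{(n{\bf r})^N}{1-n{\bf r}}$, which is exactly the right-hand side of (\ref{Lema5}).

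It then remains to show that the negative contribution $-\sum_{k\ge N}(n{\bf r})^kQ_k$ absorbs the two positive quadratic sums on the left of (\ref{Lema5}). The natural device is to interchange the order of summation and regroup $\sum_{k\ge N}(n{\bf r})^kQ_k$ according to the index $i$ of $S_i$; tracking in which $Q_k$ each $S_i$ occurs shows that $S_i$ carries total weight $w_i=\sum_{k\ge\max(N,2i+1)}(n{\bf r})^k+\frac{1}{1+|a_0|}(n{\bf r})^{2i}\mathbf 1_{\{2i\ge N\}}$. The role of the threshold $t=[(N-1)/2]$ is now transparent: for $1\le i\le t$ one has $2i<N$, the diagonal even term drops out, and $w_i=\frac{(n{\bf r})^N}{1-n{\bf r}}$, which dominates the coefficient $\frac{{\bf r}^N}{1-{\bf r}}$ attached to $S_i$ on the left because $x\mapsto x^N/(1-x)$ is increasing and ${\bf r}\le n{\bf r}$; for $i\ge t+1$ one has $2i\ge N$, so $w_i=\frac{(n{\bf r})^{2i+1}}{1-n{\bf r}}+\frac{(n{\bf r})^{2i}}{1+|a_0|}$, which dominates $\bigl(\frac{1}{1+|a_0|}+\frac{{\bf r}}{1-{\bf r}}\bigr){\bf r}^{2i}$ term by term. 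Adding these comparisons gives (positive quadratic sums)$\le\sum_{k\ge N}(n{\bf r})^kQ_k$, and combining with the previous paragraph yields (\ref{Lema5}).

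The delicate point --- and the step I expect to be the main obstacle --- is precisely this interchange-and-regrouping bookkeeping: one must keep the parity structure of Carlson's two inequalities straight, confirm that the single $\frac{1}{1+|a_0|}$--weighted \emph{diagonal} occurrence of $S_i$ (at degree $2i$) lines up exactly with the $\frac{1}{1+|a_0|}$ appearing on the left, and treat separately the degenerate case $t=0$ (that is, $N=1$), in which the $\operatorname{sgn}(t)$ sum is empty and every index falls into the second regime. Finally, for sharpness I would take $f$ to depend on a single coordinate and to be the extremal rational (Blaschke-type) function for which the underlying estimate of Lemma~\ref{Lem4} --- equivalently Lemma~\ref{Lem3} --- holds with equality; restricting to that one variable reduces (\ref{Lema5}) to the sharp one-dimensional inequality of \cite{LLP}, which shows that neither the exponent $N$ nor the form of the bound can be improved.
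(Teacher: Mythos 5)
Your proof of the inequality itself is sound and follows essentially the same route as the paper: both arguments rest on Lemma \ref{Lem4}, the enlargements $r^{\alpha}\le \mathbf{r}^{|\alpha|}$ and $n^{k/2}\le n^{k}$ (legitimate because the Carlson brackets are nonnegative), and the summation of the resulting geometric series in $n\mathbf{r}$. The only real difference is organizational: the paper splits into the two parity cases $N=2m$ and $N=2m+1$ and performs the absorption of the quadratic sums case by case, whereas you do a single interchange of summation and compare the total weight $w_i$ carried by each $S_i$ on the two sides. Your bookkeeping of $w_i$ is correct, including the role of the threshold $t=[(N-1)/2]$ and the alignment of the diagonal $\tfrac{1}{1+|a_0|}$-term, so this part stands.

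The genuine gap is in the sharpness argument. A function depending on a single coordinate, $f(z)=g(z_1)$, reduces the left-hand side of \eqref{Lema5} to the one-dimensional quantity evaluated at radius $\mathbf{r}$, and the sharp one-dimensional bound of \cite{LLP} then yields at most $(1-|a_0|^2)\,\mathbf{r}^{N}/(1-\mathbf{r})$. For $n\ge 2$ and $\mathbf{r}>0$ this is strictly smaller than the right-hand side $(1-|a_0|^2)\,(n\mathbf{r})^{N}/(1-n\mathbf{r})$ of \eqref{Lema5}, so such a function cannot witness sharpness: it leaves open the possibility that the factor $n$ could be removed or reduced. The factor $n^{k}$ in Lemma \ref{Lem4} arises from spreading coefficient mass over all monomials of degree $k$, so the extremal example must genuinely involve all coordinates. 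The paper instead takes the diagonal automorphism $f(z)=\bigl(a-(z_1+\cdots+z_n)\bigr)/\bigl(1-a(z_1+\cdots+z_n)\bigr)$ of \eqref{AM14} and evaluates at the diagonal point $z=(-r,\ldots,-r)$, where $z_1+\cdots+z_n=-nr$; this converts the effective one-variable radius into $n\mathbf{r}$ and is what makes the right-hand side attainable. You would need to replace your single-coordinate example by this diagonal construction (and then still check that the quadratic sums $\sum_{|\alpha|=k}|a_\alpha|^2r^{2\alpha}$ contribute as claimed for it) to complete the sharpness claim.
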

\begin{proof} Let $f$ be a holomorphic function in the polydisk $\mathbb{P}\Delta(0;1_n)$ such that $|f(z)|\leq 1$ and
	$f(z)=\sum_{|\alpha|=0}^{\infty}a_{\alpha}z^{\alpha}$ for all $z\in \mathbb{P}\Delta(0;1_n)$. We divide the proof into two cases.\par

\medskip
{\bf Case 1.} Let $N=2m$, where $m\in\mathbb{N}$. Then by Lemma \ref{Lem4}, we have
\begin{align}
\label{sgn1} \sum\limits_{k=2m}\sum\limits_{|\alpha|=k}|a_{\alpha}|r^{\alpha}&=\sum\limits_{k=m}\sum\limits_{|\alpha|=2k}|a_{\alpha}|r^{\alpha}+\sum\limits_{k=m}\sum\limits_{|\alpha|=2k+1}|a_{\alpha}|r^{\alpha}
\\&\leq
\sum\limits_{k=m}\left(1-\sum_{i=0}^{k-1} \sum\limits_{|\alpha|=i}|a_{\alpha}|^2-\frac{\sum\limits_{|\alpha|=k}|a_{\alpha}|^2}{1+|a_0|}\right)(n{\bf r})^{2k}\nonumber\\&\quad\nonumber+\sum\limits_{k=m}\left(1-\sum_{i=0}^k \sum\limits_{|\alpha|=i}|a_{\alpha}|^2\right)(n{\bf r})^{2k+1}\nonumber\\&\leq
\sum_{k=2m}^{\infty} (n{\bf r})^k - 
\left(\sum_{i=0}^{m-1} \sum\limits_{|\alpha|=i}|a_{\alpha}|^2 \right)\left(\sum_{k=2m}^{\infty} (n{\bf r})^k\right)
\nonumber\\&-\left(\frac{1}{1+|a_0|}+ \sum_{k=1}^{\infty} (n{\bf r})^k \right)\sum_{k=m}^{\infty} \sum\limits_{|\alpha|=k}|a_{\alpha}|^2 (n{\bf r})^{2k}\nonumber\\&=
\left(1-\sum_{i=0}^{m-1} \sum\limits_{|\alpha|=i}|a_{\alpha}|^2\right)\frac{(n{\bf r})^{2m}}{1-n{\bf r}}\nonumber\\
&-\left(\frac{1}{1+|a_0|}+ \frac{n{\bf r}}{1-n{\bf r}} \right)\sum_{k=m}^{\infty} \sum\limits_{|\alpha|=k}|a_{\alpha}|^2 (n{\bf r})^{2k}.\nonumber
\end{align}
Using (\ref{sgn1}), a simple computation shows that
\begin{align}
\label{sgn2} &\sum_{k=N}^{\infty}\sum\limits_{|\alpha|=k}|a_{\alpha}|\, r^{\alpha} + \operatorname{sgn}(t)\sum\limits_{k=1}^t\sum\limits_{|\alpha|=k}|a_{\alpha}|^2 \frac{{\bf r}^{N}}{1-{\bf r}}+\left( \frac{1}{1+|a_0|}+\frac{{\bf r}}{1-{\bf r}}\right)
\sum_{k=t+1}^{\infty} \sum\limits_{|\alpha|=k}|a_{\alpha}|^2 {\bf r}^{2k}\nonumber\\&\leq
\sum_{k=2m}^{\infty}\sum\limits_{|\alpha|=k}|a_{\alpha}|\, r^{\alpha} + \operatorname{sgn}(m-1)\sum\limits_{k=1}^{m-1}\sum\limits_{|\alpha|=k}|a_{\alpha}|^2 \frac{(n{\bf r})^{2m}}{1-n{\bf r}}\nonumber\\&+\left( \frac{1}{1+|a_0|}+\frac{n{\bf r}}{1-n{\bf r}}\right)
\sum_{k=m}^{\infty} \sum\limits_{|\alpha|=k}|a_{\alpha}|^2 {\bf r}^{2k}\nonumber\\&\leq
\left(1-\sum_{i=0}^{m-1} \sum\limits_{|\alpha|=i}|a_{\alpha}|^2\right)\frac{(n{\bf r})^{2m}}{1-n{\bf r}}+ \operatorname{sgn}(m-1)\sum\limits_{k=1}^{m-1}\sum\limits_{|\alpha|=k}|a_{\alpha}|^2 \frac{(n{\bf r})^{2m}}{1-n{\bf r}}.
\end{align}

Note that $\operatorname{sgn}(m-1)=0$ if $m=1$ and $=1$ if $m>1$. Therefore the desired result follows easily from (\ref{sgn2}).\par

{\bf Case 2.} $N=2m+1$, where $m\in\mathbb{N}\cup\{0\}$. Then using Lemma \ref{Lem4} and (\ref{sgn1}), we have
\begin{align}
\label{sgn3} &\sum\limits_{k=2m+1}\sum\limits_{|\alpha|=k}|a_{\alpha}|r^{\alpha}\\&=\sum\limits_{k=2(m+1)}\sum\limits_{|\alpha|=k}|a_{\alpha}|r^{\alpha}+\sum\limits_{|\alpha|=2m+1}|a_{\alpha}|r^{\alpha}\nonumber\\&\leq
\left(1-\sum_{i=0}^{m} \sum\limits_{|\alpha|=i}|a_{\alpha}|^2\right)\frac{(n{\bf r})^{2(m+1)}}{1-n{\bf r}}
-\left(\frac{1}{1+|a_0|}+ \frac{n{\bf r}}{1-n{\bf r}} \right)\sum_{k=m+1}^{\infty} \sum\limits_{|\alpha|=k}|a_{\alpha}|^2 (n{\bf r})^{2k}\nonumber\\&\quad+\left(1-\sum\limits_{i=0}^m\sum\limits_{|\alpha|=i}|a_{\alpha}|^2\right)(n{\bf r})^{2m+1}\nonumber\\&=
\left(1-\sum_{i=0}^{m} \sum\limits_{|\alpha|=i}|a_{\alpha}|^2\right)\frac{(n{\bf r})^{2m+1}}{1-n{\bf r}}
-\left(\frac{1}{1+|a_0|}+ \frac{n{\bf r}}{1-n{\bf r}} \right)\sum_{k=m+1}^{\infty} \sum\limits_{|\alpha|=k}|a_{\alpha}|^2 (n{\bf r})^{2k}.\nonumber
\end{align}

Using (\ref{sgn3}), we deduce that
\begin{align}
\label{sgn4} &\sum_{k=N}^{\infty}\sum\limits_{|\alpha|=k}|a_{\alpha}|\, r^{\alpha} + \operatorname{sgn}(t)\sum\limits_{k=1}^t\sum\limits_{|\alpha|=k}|a_{\alpha}|^2 \frac{{\bf r}^{N}}{1-{\bf r}}+\left( \frac{1}{1+|a_0|}+\frac{{\bf r}}{1-{\bf r}}\right)
\sum_{k=t+1}^{\infty} \sum\limits_{|\alpha|=k}|a_{\alpha}|^2 {\bf r}^{2k}\nonumber\\&\leq
\sum_{k=2m+1}^{\infty}\sum\limits_{|\alpha|=k}|a_{\alpha}|\, r^{\alpha} + \operatorname{sgn}(m)\sum\limits_{k=1}^{m}\sum\limits_{|\alpha|=k}|a_{\alpha}|^2 \frac{(n{\bf r})^{2m+1}}{1-n{\bf r}}\nonumber\\&\quad+\left( \frac{1}{1+|a_0|}+\frac{n{\bf r}}{1-n{\bf r}}\right)
\sum_{k=m+1}^{\infty} \sum\limits_{|\alpha|=k}|a_{\alpha}|^2 {\bf r}^{2k}\nonumber\\&\leq
\left(1-\sum_{i=0}^{m} \sum\limits_{|\alpha|=i}|a_{\alpha}|^2\right)\frac{(n{\bf r})^{2m+1}}{1-n{\bf r}}+\operatorname{sgn}(m)\sum\limits_{k=1}^{m}\sum\limits_{|\alpha|=k}|a_{\alpha}|^2 \frac{(n{\bf r})^{2m+1}}{1-n{\bf r}}.
\end{align}

Note that $\operatorname{sgn}(m)=0$ if $m=0$ and $=1$ if $m\geq 1$. Therefore the desired result follows easily from (\ref{sgn4}).

\medskip
To show the inequality (\ref{Lema5}) is sharp, we consider the function
\begin{align}
\label{AM14}f(z)=\frac{a-(z_1+z_2+\ldots+z_n)}{1-a(z_1+z_2+\ldots+z_n)},
\end{align}
where $a\in [0, 1)$. Clearly $f$ is holomorphic in $\mathbb{P}\Delta(0;1/n)$. Since $|a(z_1+z_2+\ldots+z_n)|<1$,
we have 
\begin{align*}
f(z)=a-(1-a^2)\sideset{}{_{k=1}^{\infty}}{\sum} a^{k-1}(z_1+z_2+\ldots+z_n)^k.
\end{align*}
for all $z\in\mathbb{P}\Delta(0;1/n)$. For the point $z=(-r,-r,\ldots,-r)$, we find that
\begin{align}
\label{AM14a}f(z)=a+(1-a^2)\sideset{}{_{k=1}^{\infty}}{\sum} (-a)^{k-1}(n{\bf r})^k.
\end{align}
If we take $a_0=a$ and $a_k=(1-a^2)(-a)^{k-1}$, then it is easy to verify that 
\begin{enumerate}
\item[(i)] $|a_{2k+1}|= 1-|a_0|^2-|a_1|^2-\ldots-|a_k|^2$,\;\;$k=0,1,\ldots$;
\item[(ii)] $|a_{2k}|= 1-|a_0|^2-|a_1|^2-\ldots-|a_{k-1}|^2-\frac{|a_k|^2}{1+|a_0|}$, \;\;$k=1,2,\ldots$.
\end{enumerate}
It is easy see that
\begin{align*}&\sum_{k=N}^{\infty}\sum\limits_{|\alpha|=k}|a_{\alpha}|\, r^{\alpha} + \operatorname{sgn}(t)\sum\limits_{k=1}^t\sum\limits_{|\alpha|=k}|a_{\alpha}|^2 \frac{{\bf r}^{N}}{1-{\bf r}}+\left( \frac{1}{1+|a_0|}+\frac{{\bf r}}{1-{\bf r}}\right)
\sum_{k=t+1}^{\infty} \sum\limits_{|\alpha|=k}|a_{\alpha}|^2 {\bf r}^{2k}\\&=
\sum_{k=N}^{\infty}|a_{k}|\, (n{\bf r})^{\alpha}+
\operatorname{sgn}(t)\sum\limits_{k=1}^t |a_{k}|^2 \frac{(n{\bf r})^{N}}{1-n{\bf r}}+\left( \frac{1}{1+|a_0|}+\frac{n{\bf r}}{1-n{\bf r}}\right)
\sum_{k=t+1}^{\infty}|a_{k}|^2 (n{\bf r})^{2k}.
\end{align*}

\noindent{\bf Case A.} If $N=2m$, where $m\in\mathbb{N}$, then we have
\begin{align*}
\sum\limits_{k=2m}\sum\limits_{|\alpha|=k}|a_{\alpha}|r^{\alpha}&=\sum\limits_{k=m}\sum\limits_{|\alpha|=2k}|a_{\alpha}|r^{\alpha}+\sum\limits_{k=m}\sum\limits_{|\alpha|=2k+1}|a_{\alpha}|r^{\alpha}\\&=
\sum\limits_{k=m}|a_{2k}|(n{\bf r})^{2k}+\sum\limits_{k=m}|a_{2k+1}|(n{\bf r})^{2k+1}\\&=
\left(1-\sum_{k=0}^{m-1}|a_{k}|^2\right)\frac{(n{\bf r})^{2m}}{1-n{\bf r}}
-\left(\frac{1}{1+|a_0|}+ \frac{n{\bf r}}{1-n{\bf r}} \right)\sum_{k=m}^{\infty}|a_{k}|^2 (n{\bf r})^{2k}.
\end{align*}

\noindent{\bf Case B.} Similarly, if $N=2m+1$, where $m\in\mathbb{N}\cup\{0\}$, then we have
\begin{align*}
\sum\limits_{k=2m+1}\sum\limits_{|\alpha|=k}|a_{\alpha}|r^{\alpha}&=\sum\limits_{k=2(m+1)}\sum\limits_{|\alpha|=k}|a_{\alpha}|r^{\alpha}+\sum\limits_{|\alpha|=2m+1}|a_{\alpha}|r^{\alpha}\\&=
\sum\limits_{k=2(m+1)}|a_{k}|(n{\bf r})^{k}+|a_{2m+1}|(n{\bf r})^{2m+1}\\&=
\left(1-\sum_{k=0}^{m-1}|a_{k}|^2\right)\frac{(n{\bf r})^{2m}}{1-n{\bf r}}
-\left(\frac{1}{1+|a_0|}+ \frac{n{\bf r}}{1-n{\bf r}} \right)\sum_{k=m}^{\infty}|a_{k}|^2 (n{\bf r})^{2k}.
\end{align*}
Consequently, we have
\begin{align}\label{sgn5}&\sum_{k=N}^{\infty}\sum\limits_{|\alpha|=k}|a_{\alpha}|\, r^{\alpha} + \operatorname{sgn}(t)\sum\limits_{k=1}^t\sum\limits_{|\alpha|=k}|a_{\alpha}|^2 \frac{({\bf r}^{N}}{1-{\bf r}}+\left( \frac{1}{1+|a_0|}+\frac{{\bf r}}{1-{\bf r}}\right)
\sum_{k=t+1}^{\infty} \sum\limits_{|\alpha|=k}|a_{\alpha}|^2 {\bf r}^{2k}\nonumber\\&=
\left(1-|a_0|^2\right)\frac{(n{\bf r})^N}{1-n{\bf r}},
\end{align}
which shows that the inequality (\ref{Lema5}) is sharp.
\end{proof}
\section{{\bf Proofs of the main results}}\label{Sec-4}

\begin{proof}[\bf Proof of Theorem \ref{Th-2.1}] 
Let us take $z=(z_1,\ldots,z_n)\in \mathbb{P}\Delta(0;1_n)$ such that ${\bf r}=||z||_{\infty}$.
Now by Lemma \ref{Lem1}, we have
\begin{align}
\label{BS1} |f(z)| \leq \frac{||z||_{\infty}+|a_0|}{1+|a_0|||z||_{\infty}}=\frac{{\bf r}+|a_0|}{1+|a_0|{\bf r}}\le \frac{n{\bf r}+|a_0|}{1+|a_0|n{\bf r}}.
\end{align}
By Lemma \ref{Lem5} and inequality (\ref{BS1}), we deduce that 
\begin{align*} \mathcal{A}_1(z, {\bf r})\leq \frac{n{\bf r}+|a_0|}{1+|a_0|n{\bf r}}+\left(1-|a_0|^2\right)\frac{(n{\bf r})^N}{1-n{\bf r}}
=1+\frac{\phi_{n,N}({\bf r})}{(1 + |a_0|{\bf r}^m)(1-n{\bf r})}\leq 1,
\end{align*}
provided $\phi_{n,N}({\bf r}) \leq 0$, where
\begin{align*} \phi_{n,N}({\bf r})&:=(n{\bf r} + |a_0|)(1 - n{\bf r}) + (1 - |a_0|^{2}) (n{\bf r})^{N} (1 + |a_0| n{\bf r}) - (1 - n{\bf r})(1 + |a_0| n{\bf r})\\&= 
(1 - |a_0|)\left[(1 + |a_0|)(1 + |a_0| n{\bf r}) (n{\bf r})^{N} - (1 - n{\bf r})^{2}\right]\\&\leq
(1 - |a_0|)\left[2(1 + n{\bf r}) (n{\bf r})^{N} - (1 - n{\bf r})^{2}\right],
\end{align*}
since $|a_0| < 1$.\vspace{1.2mm}

We see that $\phi_N(n{\bf r}) \leq 0$, if 
\[\psi_N(n{\bf r}):= 2(1 + n{\bf r}) (n{\bf r})^{N} - (1 - n{\bf r})^{2} \leq 0,\]
which holds for $n{\bf r} \leq R_{n,N}$. The first part of the theorem follows.

\smallskip
To show the sharpness of the number $R_{n,N}$, we consider the holomorphic function $f$ in the polydisk $\mathbb{P}\Delta(0;1/n)$ given by (\ref{AM14}). Obviously,
\begin{align}\label{BS2a} |f(z)|=\frac{a+n{\bf r}}{1+an{\bf r}}.
\end{align}

From (\ref{sgn5}) and (\ref{BS2a}), we derive the following
\begin{align}
\label{BS2}
\mathcal{A}_1(z, {\bf r})=\frac{a+n{\bf r}}{1+an{\bf r}}+\left(1-a^2\right)\frac{(n{\bf r})^N}{1-n{\bf r}}=1+\frac{1-a}{(1+na{\bf r})(1-n{\bf r})}H_{a,N}({n\bf r}),
\end{align}
where
\begin{align*}
	H_{a,N}(n{\bf r}):=(1+a)(1+nar)(n{\bf r})^N-(1-nr)^2<
	2(1 + n{\bf r}) (n{\bf r})^{N} - (1 - n{\bf r})^{2}.
\end{align*}
A simple computation shows that the expression on the right-hand side of inequality \eqref{BS2} is greater than $1$ if and only if $H_{a,N}(n\mathbf{r}) > 0$.\vspace{2mm}

Clearly, $H_{a,N}(n\mathbf{r})$ is a strictly increasing function of $a$ on the interval $[0, 1)$. Note that the expression in \eqref{BS2} is less than or equal to $1$ for all $a \in [0, 1]$ if and only if $n\mathbf{r} \leq R_{n,N}$. Finally, for values of $a$ sufficiently close to $1$, we observe that
\begin{align*}
	\lim\limits_{a\to 1^{-}} H_{a,N}(n{\bf r})&=2(1+nr)(n{\bf r})^N-(1-nr)^2\\&=(1+n{\bf r})(1-n{\bf r})\left(\frac{2(n{\bf r})^N}{1-n{\bf r}}-\frac{1-n{\bf r}}{(1+n{\bf r})}\right)>0,
\end{align*}
which shows that $A_1(z)>1$ for ${\bf r}>R_{n,N}$. This proves that the radius $R_{n,N}$ is best possible.
\vspace{0.2mm}

Next, we prove the second part of the theorem. Applying Lemma \ref{Lem5} and utilizing relation (\ref{BS1}), we obtain 
\begin{align} 
\label{BS3}\mathcal{A}_2(z, {\bf r})\leq \frac{n{\bf r}+|a_0|}{1+|a_0|n{\bf r}}+\left(1-|a_0|^2\right)\frac{(n{\bf r})^N}{1-n{\bf r}}=
1+(1-a^2)G_{a,N}(n{\bf r})\leq 1,
\end{align}
provided $\phi_{n,N}({\bf r}) \leq 0$, where
\begin{align*} G_{a,N}(n{\bf r})=\frac{(n{\bf r})^N}{1-n{\bf r}}-\frac{1-(n{\bf r})^2}{(1+an{\bf r})^2}.\end{align*}
It is easy to see that 
\[\frac{\partial G(a,r)}{\partial a} =\frac{(1-(n{\bf r})^2)n{\bf r}}{(1+an{\bf r})^3}>0.\]
Therefore, $G(a,r)$ is a increasing of $a\in [0,1)$ and so 
\[G(a,r)\leq G(1,r)=\frac{(n{\bf r})^N}{1-n{\bf r}}-\frac{1-n{\bf r}}{1+n{\bf r}}.\]
Hence, $\mathcal{A}_2(z, \bf r)\leq 1$ for $n{\bf r}\leq R'_{n,N}$, where $R'_{n,N}$ is the positive root of the equation $(1 + n{\bf r})(n{\bf r})^{N}-(1-n{\bf r})^{2} = 0$.

\smallskip
To show the sharpness of the number $R_{n,N}$, we consider the holomorphic function $f$ in the polydisk $\mathbb{P}\Delta(0;1/n)$ given by (\ref{AM14}). using (\ref{sgn5}) and (\ref{BS2a}), a simple computation shows that
\begin{align}
\label{BS4}
\mathcal{A}_2(z, {\bf r})=\left(\frac{a+n{\bf r}}{1+an{\bf r}}\right)^2+\left(1-a^2\right)\frac{(n{\bf r})^N}{1-n{\bf r}}=1+\frac{(1-a^2)\tilde H_{a,N}({n\bf r})}{(1+na{\bf r})^2(1-n{\bf r})},
\end{align}
where
\begin{align*}
	\tilde H_{a,N}(n{\bf r})&:=(1+nar)^2(n{\bf r})^N-(1-n^2r^2)(1-nr)\\&<
	(1+nr)^2(n{\bf r})^N-(1-n^2r^2)(1-nr).
\end{align*}
A simple computation shows that the expression on the right of \eqref{BS4} is bigger than $1$ if, and only if, $\tilde H_{a,N}(n{\bf r})>0$.\vspace{2mm}

Clearly, $\tilde H_{a,N}(n{\bf r})$ is a strictly increasing function of $a\in[0, 1)$. Note also that the expression \eqref{BS4} is less than or equal to $1$ for all $a\in [0,1]$, only in the case when $n{\bf r}\leq R'_{n,N}$. Finally, for $a$ sufficiently close to $1$, we see that
\begin{align*}
	\lim\limits_{a\to 1^{-}} \tilde H_{a,N}(n{\bf r})&=(1+nr)^2(n{\bf r})^N-(1-n^2r^2)(1-nr)\\&=(1+n{\bf r})^2(1-n{\bf r})\left(\frac{(n{\bf r})^N}{1-n{\bf r}}-\frac{1-n{\bf r}}{(1+n{\bf r})}\right)>0,
\end{align*}
which shows that $\mathcal{A}_2(z, \bf r)>1$ for ${\bf r}>R'_{n,N}$. This proves that the radius $R'_{n,N}$ is best possible.
\end{proof}

\begin{proof}[\bf Proof of Theorem \ref{Th-2.2}] 
Let us take $z=(z_1,\ldots,z_n)\in \mathbb{P}\Delta(0;1_n)$ such that ${\bf r}=||z||_{\infty}$.
We consider the first part. Using Lemma \ref{Lem5} with $N=1$ and (\ref{BS1}), we have
\begin{align*}
\mathcal{A}_3(z, {\bf r})\le \frac{n{\bf r}+|a_0|}{1+n{\bf r}|a_0|}+\frac{(1-|a_0|^{2})n{\bf r}}{1-n{\bf r}}
= \frac{1-(1-|a_0|)B_3(|a_0|,n{\bf r})}{(1+|a_0|n{\bf r})(1-n{\bf r})},
\end{align*}
where
\begin{align*}
B_3(|a_0|,n{\bf r})=(1-|a_0|-|a_0|^{2})(n{\bf r})^{2}-(3+|a_0|)n{\bf r}+1.
\end{align*}

To prove the first inequality in Theorem \ref{Th-2.2}, it suffices to prove that $B_3(|a_0|,n{\bf r})\ge 0$ for all $|a_0|\in[0,1)$ and $n{\bf r}\le r_{a_0}$.\vspace{1.2mm}

 If $|a_0|=\frac{\sqrt{5}-1}{2}$, then $1-|a_0|-|a_0|^{2}=0$ and thus,
$B_1(|a_0|,n{\bf r})\ge 0$ is equivalent to
\begin{align*}
n{\bf r}\le \frac{2}{3+|a_0|}=\frac{2}{5+\sqrt{5}}
=\frac{2}{3+|a_0|+\sqrt{5}(1+|a_0|)}.
\end{align*}

For $|a_0|\in[0,1)\backslash \{\frac{\sqrt{5}-1}{2}\}$, we have
$1-|a_0|-|a_0|^{2}\neq 0$ and thus, we see that
\begin{align*}
\frac{B_1(|a_0|,n{\bf r})}{1-|a_0|-|a_0|^{2}}
=\left(n{\bf r}-\frac{2}{3+|a_0|+\sqrt{5}(1+|a_0|)}\right)
\left(n{\bf r}-\frac{2}{3+|a_0|-\sqrt{5}(1+|a_0|)}\right).
\end{align*}
Then the desired conclusion follows by a simple analysis on the two cases
\begin{align*}
	0\le |a_0|<\frac{\sqrt{5}-1}{2}\;\; \mbox{and}\;\; \frac{\sqrt{5}-1}{2}<|a_0|<1.
\end{align*} 
To prove the sharpness, we let $a\in [0,1)$ and consider the function
\begin{align}\label{NS1} f(z)=\frac{a+(z_1+z_2+\ldots+z_n)}{1+a(z_1+z_2+\ldots+z_n)}.\end{align}
Clearly, $f$ is holomorphic in the polydisk $\mathbb{P}\Delta(0;1/n)$ and
\begin{align*} f(z)=a+(1-a^2)\sum\limits_{k=1}^{\infty} (-a)^{k-1}(z_1+z_2+\ldots+z_n)^k \end{align*}
for all $z\in\mathbb{P}\Delta(0;1/n)$. For the point $z=(r,r,\ldots,r)$,
we find that
\begin{align*} |f(z)|=\frac{a+nr}{1+nar}\end{align*}
and
\begin{align*} f(z)=a+(1-a^2)\sum\limits_{k=1}^{\infty} (-a)^{k-1}(n{\bf r})^k.\end{align*}

For this function, we find that
\begin{align*}
\mathcal{A}_3(z, \bf r)&=\frac{n{\bf r}+a}{1+an{\bf r}}+\frac{(1-a^{2})n{\bf r}}{1-an{\bf r}}
+\left(\frac{1}{1+a}+\frac{n{\bf r}}{1-n{\bf r}}\right)\frac{(1-a^{2})^{2}(n{\bf r})^{2}}{1-a^{2}(n{\bf r})^{2}}\\&=
1-\frac{(1-a)\tilde B_3(a,n{\bf r})}{(1+an{\bf r})(1-n{\bf r})},
\end{align*}
where
\begin{align*}
\tilde B_3(a,n{\bf r}):=(1-a-a^{2})(n{\bf r})^{2}-(3+a)n{\bf r}+1.
\end{align*}
The above inequality is greater than $1$ if and only if $\tilde B_3(a,n{\bf r})<0$.
It is easy to verify that $\tilde B_3(a,n{\bf r})<0$ if and only if
$$n{\bf r}>r_{a}=\frac{2}{3+a+\sqrt{5}(1+a)},$$ which proves the sharpness.
\vspace{1.2mm}

We now proceed to the second part of the proof. Once again, by applying Lemma \ref{Lem5} and relation (\ref{BS1}), we find that
\begin{align*}
\mathcal{A}_4(z, {\bf r})\le \left(\frac{n{\bf r}+|a_0|}{1+n{\bf r}|a_0|}\right)^{2}+\frac{(1-|a_0|^{2})n{\bf r}}{1-n{\bf r}}
= \frac{1-(1-|a_0|^{2})B_4(|a_0|,n{\bf r})}{(1+|a_0|n{\bf r})^{2}(1-n{\bf r})},
\end{align*}
where
\begin{align*}
B_4(|a_0|,n{\bf r}):=(1-|a_0|^{2})(n{\bf r})^{3}-(1+2|a_0|)(n{\bf r})^{2}-2n{\bf r}+1.
\end{align*}
To prove the second inequality in Theorem \ref{Th-2.2}, it suffices to show that the following condition holds
$B_4(|a_0|,n{\bf r})\ge 0$ only for $0\le n{\bf r}\le r_{a_0}^{\ast}$. Elementary calculations show that the following holds
\begin{align*}
B_4\!\left(|a_0|,\frac{1}{3n}\right)=\frac{1}{27}(1-|a_0|)(7+|a_0|)>0,
\end{align*}
\begin{align*}
B_4\!\left(|a_0|,\frac{1}{n(2+|a_0|)}\right)=
-\frac{(1-|a_0|)(1+|a_0|)^{2}}{(2+|a_0|)^{3}}<0,
\end{align*}
and
\begin{align*}
\frac{\partial B_4(|a_0|,n{\bf r})}{\partial (n{\bf r})}
=-3|a_0|^{2}(n{\bf r})^{2}-4|a_0|n{\bf r}-(1-n{\bf r})(1+3n{\bf r})-1<0.
\end{align*}
Thus, the desired conclusion is immediately verified. The optimality (or sharpness) of the constant $r_{a_0}^{\ast}$ is shown by an analogous computation, which we omit for brevity.
\end{proof}
\begin{proof}[\bf Proof of Theorem \ref{Th-2.3}]
Let us take $z=(z_1,\ldots,z_n)\in \mathbb{P}\Delta(0;1_n)$ such that ${\bf r}=||z||_{\infty}$. By simple calculation, we know that the following inequality holds
\begin{align*}
\frac{2n{\bf r}}{1-(n{\bf r})^{2}}\le 1 \quad \text{if } 0 \le n{\bf r} \le \sqrt{2}-1.
\end{align*}
By Lemma \ref{Lem2}, we have the following inequality
\begin{align}\label{SB1} |Df(z)|\leq \frac{1-|f(z)|^2}{1-{\bf r}^2} n{\bf r}\leq \frac{1-|f(z)|^2}{1-(n{\bf r})^2} n{\bf r}.
\end{align}
Suppose that
\begin{align*} \Phi(X)=X+\lambda(1-X^{2}),
\end{align*}
where $X=|f(z)|$ and $\lambda = \frac{n{\bf r}}{1-(n{\bf r})^2}$. It is easy to see that $\Phi(X)\le \Phi(X_{0})$ when 
\begin{align*}
X\le X_{0}=\frac{n{\bf r}+|a_{0}|}{1+n{\bf r}|a_{0}|}.
\end{align*}
Using Lemma \ref{Lem5} with $N=2$, along with relations (\ref{BS1}) and (\ref{SB1}), we obtain
\small{\begin{align*}
\mathcal{I}(z, {\bf r})&\le |f(z)|+\frac{n{\bf r}}{1-(n{\bf r})^{2}}\,(1-|f(z)|^{2})+\frac{(1-|a_{0}|^{2})(n{\bf r})^{2}}{1-n{\bf r}}\\&\le  
\frac{n{\bf r} +|a_{0}|}{1+n{\bf r}|a_{0}|} + \frac{n{\bf r}}{1-(n{\bf r})^{2}}
\left( 1 - \left( \frac{n{\bf r}+|a_{0}|}{1+n{\bf r}|a_{0}|}\right)^{2} \right)
+ \frac{(1-|a_{0}|^{2})(n{\bf r})^{2}}{1-n{\bf r}}\\&=
\frac{n{\bf r} +|a_{0}|}{1+n{\bf r}|a_{0}|}
+ \frac{n{\bf r}(1-|a_{0}|^{2})}{(1+n{\bf r}|a_{0}|)^{2}}
+ \frac{(1-|a_{0}|^{2})(n{\bf r})^{2}}{1-n{\bf r}}\\&
= 1 + \frac{1-|a_{0}|}{(1+|a_{0}|n{\bf r})^{2}(1-n{\bf r})}\\&\quad \times
\big[-1 + 3n{\bf r} - (n{\bf r})^{2} + (2(n{\bf r})^{2}+(n{\bf r})^{3})|a_{0}| + (2(n{\bf r})^{3}+(n{\bf r})^{4})|a_{0}|^{2} + (n{\bf r})^{4}|a_{0}|^{3}\big]\\&
\le 1 + \frac{1-|a_{0}|}{(1+|a_{0}|r)^{2}(1-r)}\\&\quad \times
[-1+3n{\bf r}-(n{\bf r})^{2}+(2(n{\bf r})^{2}+(n{\bf r})^{3})+(2(n{\bf r})^{3}+(n{\bf r})^{4})+(n{\bf r})^{4}]\\&
= 1 + \frac{2(1-|a_{0}|)(1+(n{\bf r})^{2})}{(1+|a_{0}|n{\bf r})^{2}(1-n{\bf r})}
\left(n{\bf r}-\frac{\sqrt{17}-3}{4}\right)
\left(n{\bf r}+\frac{\sqrt{17}+3}{4}\right)\\&\le 1,
\end{align*}}
for 
\begin{align*}
	0 \le n{\bf r} \le \frac{\sqrt{17}-3}{4} < \sqrt{2}-1.
\end{align*}
\vspace{0.2mm}
It is easy to see that
\begin{align*}
	\frac{1-n{\bf r}}{1-(n{\bf r})^{2}}\ge 0\;\; \mbox{if}\;\;0 \le n{\bf r} \le \frac{1}{2}
\end{align*} 
Again, following the approach of the previous case, we find that
\begin{align*}
\mathcal{J}(z, \bf r)&\le \left( \frac{n{\bf r}+|a_{0}|}{1+n{\bf r}|a_{0}|}\right)^{2}
+ \frac{n{\bf r}(1-|a_{0}|^{2})}{(1+n{\bf r}|a_{0}|)^{2}+ (1-|a_{0}|^{2})(n{\bf r})^{2}}\left(\frac{1}{1-n{\bf r}}\right)\\&=
1 + \frac{\left(1-|a_{0}|^{2}\right)\left(-(1-n{\bf r})(1-n{\bf r}-(n{\bf r})^{2}) + (n{\bf r})^{2}(1+n{\bf r}|a_{0}|)^{2}\right)}{(1+|a_{0}|n{\bf r})^{2}(1-n{\bf r})}
\\&
\le 1 + \frac{\left(1-|a_{0}|^{2}\right)\left(-1+2n{\bf r}+(n{\bf r})^{2}+(n{\bf r})^{3}+(n{\bf r})^{4}\right)}{(1+|a_{0}|n{\bf r})^{2}(1-n{\bf r})}
\\&
\le 1,
\end{align*}
for $0 \le {\bf r} \le r_{0}<\frac{1}{2n}$.\vspace{1.2mm}

To show the sharpness, we consider the holomorphic function $f$ in the polydisk $\mathbb{P}\Delta(0;1/n)$ given by (\ref{NS1}).
Now for $z=(r,r,\ldots,r)$, we get
\small{\begin{align*}
\mathcal{J}(z, \bf r)&=\left(\frac{n{\bf r}+a}{1+n{\bf r}a}\right)^{2}
+ \frac{(1-a^{2})n{\bf r}}{(1+an{\bf r})^{2}}+\frac{(1-a^{2})a(n{\bf r})^{2}}{1-an{\bf r}}
+ \frac{1+an{\bf r}}{(1+a)(1-n{\bf r})}\left(\frac{(1-a^{2})^{2}(n{\bf r})^{2}}{1-a^{2}(n{\bf r})^2}\right)\\&=
1 + \frac{\left(1-a^{2}\right)\left(-1 + 2n{\bf r} + (n{\bf r})^{2} - (n{\bf r})^{3} + 2(n{\bf r})^{3}a + (n{\bf r})^{4}a^{2}\right)}{(1+an{\bf r})^{2}(1-n{\bf r})}.
\end{align*}}

The last expression is larger than $1$ if, and only if,
\begin{align*}
\tilde J(a,n{\bf r})= -1+2n{\bf r}+(n{\bf r})^{2}-(n{\bf r})^{3}+2(n{\bf r})^{3}a+(n{\bf r})^{4}a^{2}>0
\end{align*}
for all $a \in [0,1)$ and all $n{\bf r}$ in some subset of $[0,1)$.
The equivalent condition implies $\tilde J(a,n{\bf r})\ge 0$ by $a\to 1$. It is easy to see that $\tilde J(1,n{\bf r})\ge 0$ if and only if ${\bf r}\ge r_{0}$. This shows the sharpness.
\end{proof}

\begin{proof}[\bf Proof of Theorem \ref{Th-2.4}]
Let us take $z=(z_1,\ldots,z_n)\in \mathbb{P}\Delta(0;1_n)$ such that ${\bf r}=||z||_{\infty}$. 
By simple calculations, we can see that
\begin{align*}
n{\bf r} \le n \tilde R_{n,N} < \frac{1}{2} \quad \text{if and only if} \quad \frac{2(n {\bf r})^N}{(1+n{\bf r})(1-2{\bf r})(1-{\bf r})^{N-1}} \le 1.
\end{align*}

Using Lemmas \ref{Lem1} and \ref{Lem2}, we have
\begin{align*}
\mathcal{M}(z, \bf r):=&|f(z)| +\sum_{k=N}^{\infty} \ \sum_{|\alpha|=k}\frac{1}{\alpha!}\left|\frac{\partial^{|\alpha|} f(z)}{\partial z_1^{\alpha_1}\cdots \partial z_n^{\alpha_n}}\right| |z|^{\alpha}\\
\le & |f(z)|+ \sum_{k=N}^{\infty} \ \sum_{|\alpha|=k}\frac{1-|f(z)|^2}{(1-||z||_{\infty}^2)^{|\alpha|}}(1+||z||_{\infty})^{|\alpha|-1}|z|^\alpha\\
\le  &|f(z)| + (1-|f(z)|^2)\sum_{k=N}^{\infty} \frac{(1+(n{\bf r}))^{k-1} (n{\bf r})^k}{(1-(n{\bf r})^2)^k}\\
=& |f(z)| + (1-|f(z)|^2) \frac{(n{\bf r})^N}{(1+n{\bf r})(1-2n{\bf r})(1-n{\bf r})^{N-1}}.
\end{align*}
Applying the same argument as in the proof of Theorem \ref{Th-2.3}, we proceed as follows to obtain
\small{\begin{align*}
\mathcal{M}(z, {\bf r})\leq &\frac{|a_0|+n{\bf r}}{1+n|a_0|{\bf r}} +\left(1-\left(\frac{|a_0|+n{\bf r}}{1+n|a_0|{\bf r}}\right)^2\right) \frac{(n{\bf r})^N}{(1+n{\bf r})(1-2n{\bf r})(1-n{\bf r})^{N-1}}\nonumber\\
\le & \frac{(|a_0|+n{\bf r})(1+n|a_0|{\bf r})(1-n{\bf r})^N(1-2{\bf r}) + (1-|a_0|^2)(1-n{\bf r})^2 (n{\bf r})^N}{(1-n{\bf r})^N(1-2n{\bf r})(1+n|a_0|{\bf r})^2}:=\omega_{n,N}(n{\bf r})
\end{align*}}
$0 \le n{\bf r} \le n\tilde R_{n,N} < \frac{1}{2}$.\vspace{1.2mm}

We see that $\omega_{n,N}(n{\bf r}) \le 1$ if $\nu_{nn,N}(n{\bf r}) \le 0$, where
\begin{align*}
\nu_{n,N}(n{\bf r})&= (|a_0|+n{\bf r})(1+n|a_0|{\bf r})(1-n{\bf r})^N(1-2n{\bf r}) + (1-|a_0|^2)(1-n{\bf r})^2(n{\bf r})^N\\&
\quad- (1-n{\bf r})^N(1-2n{\bf r})(1+n|a_0|{\bf r})^2\\&=
(1-|a_{0}|)\left(-1 + (3-|a_{0}|)n{\bf r})+(3|a_{0}|-2)(n{\bf r})^{2}- 2|a_{0}|(n{\bf r})^{3}\right)(1-n{\bf r})^{N}\\&\quad+
 (1-|a_{0}|)(1+|a_{0}|)\, (n{\bf r})^{N} (1-n{\bf r})^{2}\\&=
 (1 - |a_{0}|)\left((-1 + 3n{\bf r} - 2(n{\bf r})^{2})(1-n{\bf r})^{N}+(n{\bf r})^{N}(1-n{\bf r})^{2}\right)\\&=
 (1 - |a_{0}|)|a_{0}|n{\bf r}(1-n{\bf r})^{2}\left((n{\bf r})^{N-1}-(1-2n{\bf r})(1-n{\bf r})^{N-1}\right).
\end{align*}
To complete the proof, we now consider following two cases.\vspace{1.2mm}

\textbf{Case 1.} Let $n{\bf r} \le \tilde R_{n,N,1}$, where $\tilde R_{n,N,1}$ is the minimum positive root of the equation 
\begin{align*}
\phi_{n,N}(n{\bf r}) = (1-2n{\bf r})(1-n{\bf r})^{N-1} - (n{\bf r})^{N-1} = 0.
\end{align*}
Since $(n{\bf r})^{N-1} - (1-2n{\bf r})(1-n{\bf r})^{\,N-1} \le 0$ and $|a_0|<1$, we have
\begin{align*}
\nu_{n,N}(n{\bf r}) \le &(1-|a_0|)(1-n{\bf r})^2 \left(\frac{1}{n{\bf r}}\left( (n{\bf r})^{N-1} - (1-2n{\bf r})(1-n{\bf r})^{N-1}\right) \right)\\
\le &(1-|a_0|)(1-n{\bf r})^2\left( (n{\bf r})^{N-1} - (1-2n{\bf r})(1-n{\bf r})^{N-1}\right) \le 0.
\end{align*}

\textbf{Case 2.} Let $\tilde R_{n,N,1} < n{\bf r} \le \tilde R_{n,N}$. Since $\tilde R_{n,N,1}<\tilde R_{n,N}$ and 
\begin{align*}
(n{\bf r})^{N-1} - (1-2n{\bf r})(1-n{\bf r})^{\,N-1}>0 \quad \text{for } n{\bf r}>\tilde R_{n,N,1},
\end{align*}
we have
\begin{align*}
\nu_{n,N}(n{\bf r}) \le & (1-|a_0|)\left[(-1+3n{\bf r}-2(n{\bf r})^2)(1-n{\bf r})^N + (n{\bf r})^N(1-n{\bf r})^2\right]\\
&+(1-|a_0|)n{\bf r}(1-n{\bf r})^2 \left((n{\bf r})^{\,N-1}-(1-2n{\bf r})(1-n{\bf r})^{N-1}\right)\\
\le &(1-|a_0|)(1-n{\bf r})^2 \left(2(n{\bf r})^N-(1+n{\bf r})(1-2n{\bf r})(1-n{\bf r})^{N-1}\right) \le 0.
\end{align*}
Hence, the desired inequality follows.\vspace{1.2mm}

To show the sharpness of $\tilde R_{n,N}$, let $a \in [0,1)$ and consider the holomorphic function $f$ in $\mathbb{P}\Delta(0;1/n)$ given by (\ref{AM14}). For the point $z=(-r,-r,\ldots,-r)$, we find that

\begin{align}\label{pp1}
	\mathcal{M}(z, {\bf r}):=\frac{a-n{\bf r}}{1-na{\bf r}} + (1-a^2)\frac{a^{N-1} n({\bf r})^N}{(1-na{\bf r})^N (1-2n a{\bf r})}\;\; \mbox{for}\;\; n{\bf r} < \frac{1}{2a}.
\end{align}
This expression is greater than $1$ if, and only, if
\begin{align}
	\label{pp2}
	(1-a)\left((-1+(2a-1)n{\bf r}+2a(n{\bf r})^2)(1-na{\bf r})^{N-1} + (1+a)a^{N-1}(n{\bf r})^N\right) > 0.
\end{align}
We define
\begin{align*}
P_1(a,n{\bf r}) := (-1+(2a-1)n{\bf r}+2a(n{\bf r})^2)(1-na{\bf r})^{N-1} + (1+a)a^{N-1} (n {\bf r})^N.
\end{align*}
We find through computation that $\frac{\partial P_4}{\partial a} \ge 0$ when $n\mathbf{r} < \frac{1}{2}$. It follows immediately that
\begin{align*}
P_1(a,n{\bf r})& \le P_1(1,n{\bf r})\\&= (-1+n{\bf r}+2n({\bf r})^2)(1-n{\bf r})^{N-1} + 2(n{\bf r})^N\\&
= 2(n{\bf r})^N - (1+n{\bf r})(1-2n{\bf r})(1-n{\bf r})^{N-1}
\end{align*}
holds for $n{\bf r}<\frac{1}{2}$. Therefore, equation (\ref{pp1}) is less than or equal to $1$ for all $a \in [0, 1)$ if and only if $n\mathbf{r} \le \tilde R_{n,N}$. Furthermore, letting $a \to 1$ in (\ref{pp2}) shows that equation (\ref{pp1}) is greater than $1$ when $n\mathbf{r} > \tilde R_{n,N}$. This establishes the sharpness of the result.
\vspace{1.2mm}

Next, we prove the second part. Using Lemmas \ref{Lem1} and \ref{Lem2}, we obtain the following
\begin{align*}
\mathcal{N}(z, {\bf r}):&=|f(z)|^2 +\sum_{k=N}^{\infty} \ \sum_{|\alpha|=k}\frac{1}{\alpha!}\left|\frac{\partial^{|\alpha|} f(z)}{\partial z_1^{\alpha_1}\cdots \partial z_n^{\alpha_n}}\right| |z|^{\alpha}\\
&\le |f(z)|^2+ \sum_{k=N}^{\infty} \ \sum_{|\alpha|=k}\frac{1-|f(z)|^2}{(1-||z||_{\infty}^2)^{|\alpha|}}(1+||z||_{\infty})^{|\alpha|-1}|z|^\alpha\\
&\le |f(z)|^2 + (1-|f(z)|^2)\sum_{k=N}^{\infty} \frac{(1+(n{\bf r}))^{k-1} (n{\bf r})^k}{(1-(n{\bf r})^2)^k}\\
&= |f(z)|^2 + (1-|f(z)|^2) \frac{(n{\bf r})^N}{(1+n{\bf r})(1-2n{\bf r})(1-n{\bf r})^{\,N-1}}.
\end{align*}
Applying the same argument as in the proof of Theorem \ref{Th-2.3}, we arrive at the following expression
\begin{align}
\label{BSN1}
\mathcal{N}(z, \bf r)&\leq \left(\frac{|a_0|+n{\bf r}}{1+n|a_0|{\bf r}} \right)^2
+\left(1-\left(\frac{|a_0|+n{\bf r}}{1+n|a_0|{\bf r}}\right)^2\right) 
\frac{(n{\bf r})^N}{(1+n{\bf r})(1-2n{\bf r})(1-n{\bf r})^{\,N-1}}\nonumber\\
&\le \frac{(1-(n{\bf r})^{N})(1-n{\bf r})}{(1+n{\bf r})(1-2n{\bf r})(1-n{\bf r})^{N}} |f(z)|^{2}
+ \frac{(n{\bf r})^{N}(1-n{\bf r})}{(1+n{\bf r})(1-2n{\bf r})(1-n{\bf r})^{N}} \nonumber\\
&\le \frac{(|a_{0}|+n{\bf r})^{2}(1-n{\bf r})^{N}(1-2n{\bf r})
+(1-|a_{0}|^{2})(n{\bf r})^{N}(1-n{\bf r})^{2}}
{(1-n{\bf r})^{N}(1+|a_{0}|n{\bf r})^{2}(1-2n{\bf r})}.
\end{align}
Clearly, the expression in (\ref{BSN1}) is less than or equal to $1$ provided $\omega_{n,N}(r) \le 1$, where $\omega_{n,N}(r)$ is defined as
\begin{align*}
\tilde \omega_{n,N}(r)
:= \frac{ (|a_{0}|+n{\bf r})^{2}(1-n{\bf r})^{N}(1-2n{\bf r}) + (1-|a_{0}|^{2})(n{\bf r})^{N}(1-n{\bf r})^{2} }
{ (1-n{\bf r})^{N}(1+|a_{0}|n{\bf r})^{2}(1-2n{\bf r})}.
\end{align*}
It is easy to see that $\tilde \omega_{n,N}(r)\le 1$ if $\tilde \nu_{n,N}(r)\le 0$, where
\begin{align*}
\tilde \nu_{n,N}(r)&= (|a_{0}|+n{\bf r})^{2}(1-n{\bf r})^{N}(1-2n{\bf r})
  + (1-|a_{0}|^{2})(n{\bf r})^{N}(1-n{\bf r})^{2}\\&
  - (1-n{\bf r})^{N}(1+|a_{0}|n{\bf r})^{2}(1-2n{\bf r}) \\
&= (1-|a_{0}|^{2}) \Big((1-n{\bf r})^{N}(-1+(n{\bf r})^{2}+2n{\bf r}-2(n{\bf r})^{3}) + (n{\bf r})^{N}(1-n{\bf r})^{2}\Big) \\
&= (1-|a_{0}|^{2}) \Big((1-n{\bf r})^{N}(1-n{\bf r})(1+n{\bf r})(2n{\bf r}-1) + (n{\bf r})^{N}(1-n{\bf r})^{2}\Big).
\end{align*}
Clearly, $\tilde \nu_{n,N}(r)\le 0$, provided that
\begin{align*}
(1+n{\bf r})(1-2n{\bf r})(1-n{\bf r})^{N-1} - (n{\bf r})^{N} \ge 0 ,
\end{align*}
which holds for $r\le R_{n,N}'$, where $R_{n,N}'$ is as in the statement of the theorem.

To show the sharpness of $\tilde R_{n,N}'$, let $a \in [0,1)$ and consider the holomorphic function $f$ in $\mathbb{P}\Delta(0;1/n)$ given by (\ref{AM14}). For the point $z=(-r,-r,\ldots,-r)$, we find that
\begin{align}\label{MSN2}
\mathcal{N}(z, {\bf r})&:=\left(\frac{a-n{\bf r}}{1-na{\bf r}}\right)^2 + (1-a^2)\frac{a^{N-1} n({\bf r})^N}{(1-na{\bf r})^N (1-2n a{\bf r})}\nonumber\\
&= \frac{ (a-n{\bf r})^{2}(1-an{\bf r})^{N-2}(1-2an{\bf r}) + (1-a^{2})a^{N-1}(n{\bf r})^{N} }{ (1-an{\bf r})^{N}(1-2an{\bf r}) }. 
\end{align}
Equation (\ref{MSN2}) is larger than $1$ if and only if
\begin{align}
\label{MSN3}
(1-a^{2})\Big((-1+2an{\bf r}+(n{\bf r})^{2}-2a(n{\bf r})^{3})(1-an{\bf r})^{N-2} + a^{N-1}(n{\bf r})^{N}\Big) > 0. 
\end{align}
Elementary calculation, coupled with taking the limit $a \to 1$ in (\ref{MSN3}), shows that the expression in (\ref{MSN2}) exceeds $1$ when $r > R_{n,N}'$.
\end{proof}
\vspace{5mm}
\noindent\textbf{Conflict of interest:} The authors declare that there is no conflict  of interest regarding the publication of this paper.\vspace{1.2mm}

\noindent {\bf Funding:} Not Applicable.\vspace{1.2mm}

\noindent\textbf{Data availability statement:}  Data sharing not applicable to this article as no datasets were generated or analysed during the current study.\vspace{1.2mm}

\noindent {\bf Authors' contributions:} All the authors have equal contributions in preparation of the manuscript.

\end{document}